\pgfplotsset{compat=1.10}
\newtheorem{theorem}{Theorem}[section]
\newtheorem{corollary}[theorem]{Corollary}
\newtheorem{lemma}[theorem]{Lemma}
\newtheorem{proposition}[theorem]{Proposition}
\newtheorem{remark}[theorem]{Remark}
\newcommand{\R}{\mathbb{R}}
\newcommand{\N}{\mathbb{N}}
\newcommand{\rd}{\mathrm{d}}
\definecolor{cadmiumgreen}{rgb}{0.0, 0.42, 0.24}
\numberwithin{equation}{section}
\numberwithin{figure}{section}
\begin{document}

\title[ ]{Reinforced Limit of a MEMS Model with Heterogeneous Dielectric Properties}

\author{Philippe Lauren\c{c}ot}
\address{Institut de Math\'ematiques de Toulouse, UMR~5219, Universit\'e de Toulouse, CNRS \\ F--31062 Toulouse Cedex 9, France}
\email{laurenco@math.univ-toulouse.fr}
\author{Katerina Nik}
\address{Leibniz Universit\"at Hannover\\ Institut f\" ur Angewandte Mathematik \\ Welfengarten 1 \\ D--30167 Hannover\\ Germany}
\email{nik@ifam.uni-hannover.de}
\author{Christoph Walker}
\address{Leibniz Universit\"at Hannover\\ Institut f\" ur Angewandte Mathematik \\ Welfengarten 1 \\ D--30167 Hannover\\ Germany}
\email{walker@ifam.uni-hannover.de}
%
\thanks{Partially supported by the CNRS Projet International de Coop\'eration Scientifique PICS07710}
\date{\today}
\keywords{MEMS, transmission problem, Gamma convergence, mixed boundary conditions, non-Lipschitz domains}
\subjclass[2010]{35Q74,74G65,35J20,35J25}
%
%
\begin{abstract}
A MEMS model with an insulating layer is considered and its reinforced limit is derived by means of a Gamma convergence approach when the thickness of the layer tends to zero. The limiting model inherits the dielectric properties of the insulating layer.
\end{abstract}
%
\maketitle
%
\section{Introduction}

Idealized microelectromechanical systems (MEMS)  consist of two dielectric plates: a rigid ground plate above which an elastic plate is suspended. The latter is electrostatically actuated by a Coulomb force which is induced across the device by holding the two plates at different voltages. In this set-up there is thus a competition between attractive electrostatic forces and restoring mechanical forces due to the elasticity of the plate. When the two plates are not prevented from touching each other, a contact of the plates commonly leads to an instability of the device --~also known in the literature as ``pull-in instability''~-- which is revealed as a singularity in the corresponding mathematical equations, e.g., see \cite{LWBible,PeB03} and the references therein. In contrast, when the ground plate is coated with an insulating layer preventing a direct contact of the plates, see Figure~\ref{F1}, a touchdown of the elastic plate on this layer does not result in an instability as the device may continue to operate without interruption (though it still leads to a peculiar situation from a mathematical point of view).  Different mathematical models describing this setting including an insulating layer were introduced \cite{AmEtal,BG01,LW19,YZZ12,LLG14,LLG15}. The basic assumption in all these models is that the state of the device is fully described by the vertical deflection of the elastic plate and the electrostatic potential in the device. According to \cite{BG01,LW19,LLG14,LLG15} the dynamics of the former is governed by an evolution equation while that of the latter is governed by an elliptic equation in a time-varying domain enclosed by the two plates. Due to the heterogeneity of the dielectric properties of the device, this elliptic equation is actually a transmission problem 
 (see \eqref{TMP} below) on the non-smooth time-dependent domain with a transmission condition at the interface separating the insulating layer and the free space. The analysis of such a model turns out to be quite involved \cite[Section~5]{LW19}.  Therefore, several simpler and more tractable models were derived on the assumption of a vanishing aspect ratio of the device \cite{AmEtal,BG01,LW19,LLG14,LLG15}. Thanks to this approximation the electrostatic potential can be computed explicitly in terms of the deflection of the elastic plate, and the model thus reduces to a single equation for the deflection.

The aim of the present work is to derive an intermediate model by letting only the thickness of the insulating  layer go to zero (instead of the aspect ratio of the device). Our starting point is the model analyzed in \cite{LW19} in which we introduce an appropriate scaling of the dielectric permittivity in dependence on the layer's thickness (see \eqref{sigmad} below) and use a Gamma convergence approach to study the limiting behavior. The specific choice of the scaling is required in order to keep relevant information of the dielectric heterogeneity of the device and can be interpreted as a reinforced limit from a mathematical point of view \cite{AB86}.

To be more precise, we recall the model stated in \cite[Section 5]{LW19}. Let $D\subset\R^n$ with $n\ge 1$ be a bounded $C^2$-domain representing the (identical) horizontal cross-section of the two plates (actually, only the cases $n\in\{1,2\}$ are physically relevant for applications to MEMS, the ground plate being $D\times (-H-d,-H)$ and thus a two or three dimensional object). The  dielectric layer of thickness  $\delta>0$ on top of the ground plate located at $z=-H-\delta$ with $H>0$ is then given by
$$
\mathcal{R}_\delta  :=  D\times (-H-\delta,-H)\,.
$$
The deflection of the elastic plate from its rest position at $z=0$ is described by a function  $u:\bar{D}\rightarrow [-H, \infty)$ with $u=0$ on $\partial D$ so that
$$
 \Omega(u):=\left\{(x,z)\in D\times \mathbb{R}\,:\, -H<  z <  u(x)\right\}
$$
is the free space between the elastic plate and the top of the dielectric layer.
We let 
$$
\Sigma (u):=\{(x,-H)\,:\, x\in D,\, u(x)>-H\}
$$
denote the interface separating free space and dielectric layer and put
$$
 \Omega_\delta({u}):=\left\{(x,z)\in D\times \mathbb{R} \,:\, -H-\delta<  z <  u(x)\right\}=\mathcal{R}_\delta \cup  \Omega( {u})\cup  \Sigma(u)\,.
$$
If the elastic plate and the insulating layer remain separate, that is, if $u>-H$ in $D$, then $\Sigma(u)$ coincides with
$$
\Sigma :=D\times \{-H\}\,.
$$
In contrast, a touchdown of the elastic plate on the insulating layer corresponds to a non-empty {\it coincidence set}
$$
\mathcal{C}(u):=\{x\in D\, :\, u(x)=-H\}
$$
and a different geometry as the free space $\Omega(u)$ then has several connected components. It is worth pointing out that in this case --~independent of the smoothness of the function $u$~-- these components may not be Lipschitz domains, a feature which requires some special care in the mathematical analysis.

The different situations with empty and non-empty coincidence sets are depicted in Figure~\ref{F1}.
 \begin{figure}
 	\begin{tikzpicture}[scale=0.9]
 	\draw[black, line width = 1.5pt, dashed] (-7,0)--(7,0);
 	\draw[black, line width = 2pt] (-7,0)--(-7,-5);
 	\draw[black, line width = 2pt] (7,-5)--(7,0);
 	\draw[black, line width = 2pt] (-7,-5)--(7,-5);
 	\draw[black, line width = 2pt] (-7,-4)--(7,-4);
 	\draw[black, line width = 2pt, fill=gray, pattern = north east lines, fill opacity = 0.5] (-7,-4)--(-7,-5)--(7,-5)--(7,-4);
 	\draw[cadmiumgreen, line width = 2pt] plot[domain=-7:7] (\x,{-1-cos((pi*\x/7) r)});
 	\draw[blue, line width = 2pt] plot[domain=-7:-3] (\x,{-2-2*cos((pi*(\x+3)/4) r)});
 	\draw[blue, line width = 2pt] (-3,-4)--(1,-4);
 	\draw[blue, line width = 2pt] plot[domain=1:7] (\x,{-2-2*cos((pi*(\x-1)/6) r)});
 	\draw[cadmiumgreen, line width = 1pt, arrows=->] (3,0)--(3,-1.15);
 	\node at (3.2,-0.6) {${\color{cadmiumgreen} v}$};
 	\draw[blue, line width = 1pt, arrows=->] (-5,0)--(-5,-1.85);
 	\node at (-4.8,-1) {${\color{blue} w}$};
 	\node[draw,rectangle,white,fill=white, rounded corners=5pt] at (2,-4.5) {$\Omega_1$};
 	\node at (2,-4.5) {$\mathcal{R}_\delta $};
 	\node at (-2,-3) {${\color{cadmiumgreen} \Omega(v)}$};
 	\node at (3.75,-5.75) {$D$};
	\draw (3.55,-5.75) edge[->,bend left, line width = 1pt] (2.3,-5.1);
 	\node at (7.7,-3) {$\Sigma(w)$};
 	\draw (7.2,-3) edge[->,bend right, line width = 1pt] (5.2,-3.9);
 	\node at (-7.8,1) {$z$};
 	\draw[black, line width = 1pt, arrows = ->] (-7.5,-6)--(-7.5,1);
 	\node at (-8.4,-5) {$-H-\delta$};
 	\draw[black, line width = 1pt] (-7.6,-5)--(-7.4,-5);
 	\node at (-8,-4) {$-H$};
 	\draw[black, line width = 1pt] (-7.6,-4)--(-7.4,-4);
 	\node at (-7.8,0) {$0$};
 	\draw[black, line width = 1pt] (-7.6,0)--(-7.4,0);
 	\node at (1,-3) {${\color{blue} \mathcal{C}(w)}$};
 	\draw (0.45,-3) edge[->,bend right,blue, line width = 1pt] (-0.5,-3.95);
 	\end{tikzpicture}
 	\caption{Geometry of $\Omega_\delta(u)$ when $n=1$ for a state $u=v$ with empty coincidence set (\textcolor{cadmiumgreen}{green}) and a state $u=w$ with non-empty coincidence set (\textcolor{blue}{blue}).}\label{F1}
 \end{figure}
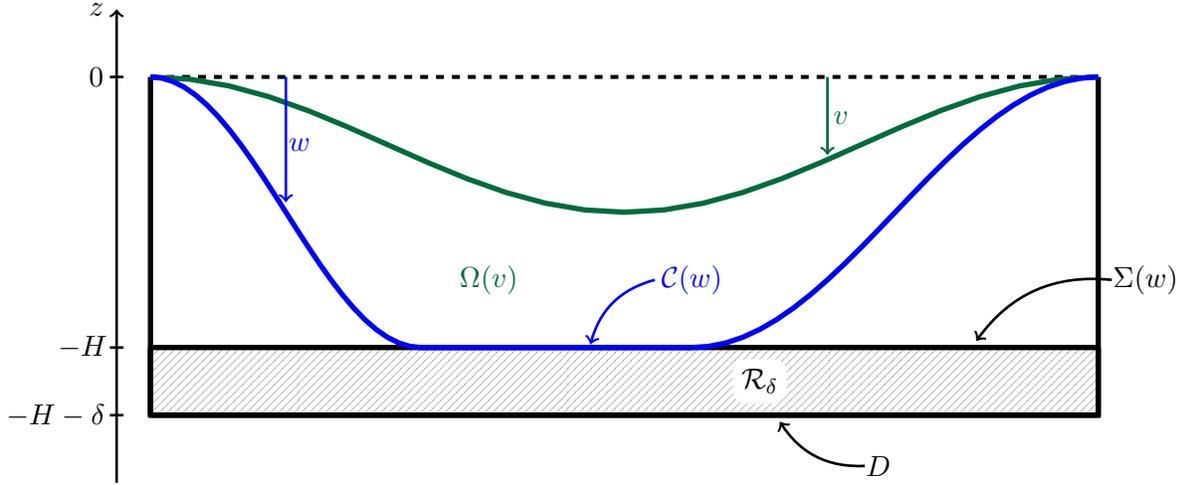

In the model considered in \cite[Section~5]{LW19}, the deflection $u$ of the elastic plate is governed by an evolution equation involving contributions from mechanical and electrostatic forces, the latter depending on the electrostatic potential denoted by $\psi$ in the following. However, for the derivation of the limiting problem for the electrostatic potential, the evolution of $u$ does not play any role. We thus
 consider throughout this paper a fixed geometry $\Omega(u)$; that is, we consider the function $u:\bar{D}\rightarrow [-H, \infty)$ with $u=0$ on $\partial D$ describing the deflection of the elastic plate as given and fixed. We refer to \cite[Section~5]{LW19} for the full model.

Given such a function $u$, the {\it electrostatic potential} $\psi=\psi_{u,\delta}$  satisfies the transmission problem
\begin{subequations}\label{TMP}
\begin{align}\label{e1}
\mathrm{div}(\sigma_\delta\nabla\psi)&=0 \quad\text{in }\ \Omega_\delta(u)\,, \\
\llbracket \psi \rrbracket = \llbracket \sigma_\delta \partial_z \psi\rrbracket &=0\quad\text{on }\  \Sigma(u)\,,\label{e2} \\
\psi &=h_{u,\delta}\quad\text{on }\ \partial\Omega_\delta(u)\,,\label{e3}
\end{align}
\end{subequations}
and the corresponding {\it electrostatic energy} of the device with geometry $\Omega_\delta(u)$ is
$$
E_{e,\delta}(u):=-\frac{1}{2}\int_{\Omega_\delta(u)}  \sigma_\delta \vert\nabla \psi_{u,\delta}\vert^2\,\rd (x,z)\,.
$$
Here, $\sigma_\delta$ is the permittivity of the device, which is different in the insulating layer and free space, and $h_{u,\delta}$ is a given suitable function describing the boundary values of the electrostatic potential. By $\llbracket \cdot \rrbracket$ we denote the jump of a function across the interface $\Sigma(u)$. The Lax-Milgram theorem provides the existence of a unique electrostatic potential $\psi_{u,\delta}=\chi_{u,\delta}+h_{u,\delta}$ solving \eqref{TMP} in a variational sense, and the function $\chi_{u,\delta}\in H_0^1(\Omega_\delta(u))$ is the minimizer of the Dirichlet integral
$$
G_\delta[\vartheta]:=\frac{1}{2}\int_{\Omega_\delta(u)}  \sigma_\delta \vert\nabla (\vartheta+h_{u,\delta})\vert^2\,\rd (x,z)
$$
among functions $\vartheta\in H_0^1(\Omega_\delta(u))$, see Proposition~\ref{P1} below.

In the following we shall derive the limiting model obtained from \eqref{TMP} as $\delta\rightarrow 0$ when imposing suitable assumptions on the function $h_{u,\delta}$ defining the boundary values of the potential (see \eqref{bobbybrown} below) and on the permittivity $\sigma_\delta$ (see \eqref{sigma} below), so that information on the dielectric heterogeneity is inherited. As for the permittivity we assume that it is constant (normalized to 1) in $\Omega(u)$ and a reinforced limit $\sigma_\delta=O(\delta)$ in $\mathcal{R}_\delta$. We then shall follow \cite{AB86} to compute the Gamma limit with respect to the $L_2$-topology  of the family of functionals $(G_\delta)_{\delta\in (0,1)}$ as $\delta\rightarrow 0$, which turns out to be the functional 
\begin{equation*}
G[\vartheta]:= \frac{1}{2}\displaystyle\int_{ \Omega(u)}   \big\vert\nabla (\vartheta+h_u)\big\vert^2\,\rd (x,z) +\frac{1}{2}\displaystyle\int_{ D} \big(\sigma \big\vert \vartheta+h_u-\mathfrak{h}_u\big\vert^2\big)(x,-H)\,\rd x 
\end{equation*}
with $h_u$ and $\mathfrak{h}_u$ defined below in \eqref{h00} and in \eqref{h0}, respectively, see Theorem~\ref{P3}. Let us emphasize here that an utmost challenging feature of the limiting problem is that $\Omega(u)$ need not be a Lipschitz set as it may have cusps when the coincidence set $\mathcal{C}(u)$ is nonempty. Therefore, the usual trace theorem is not available and a meaningful definition of $G$ requires a suitable definition of a trace on $(D \setminus \mathcal{C}(u))\times \{-H\}$ for functions in (a subset of) $H^1(\Omega(u))$, see Lemmas~\ref{lemT1H} and~\ref{lemT2H}. Once this issue is settled,
the existence of a minimizer $\chi_u$ of $G$ in a suitable subset of $H^1(\Omega(u))$ is shown by classical arguments, see Proposition~\ref{lemt0P}. The derivation of  the corresponding Euler-Lagrange equation offers further  challenges again related to the non-smoothness of $\Omega(u)$. Indeed, a formal computation reveals that $\psi_u=\chi_u+h_u$ solves Laplace's equation on $\Omega(u)$ with a Robin boundary condition along the interface $\Sigma(u)$ and a Dirichlet  condition on the other boundary parts; that is,
\begin{subequations}\label{MBP0}
\begin{align}
\Delta\psi_u&=0 \quad\text{in }\ \Omega(u)\,, \label{MBP1}\\
\psi_u &=h_u\quad\text{on }\ \partial\Omega(u)\setminus  \Sigma(u)\,,\label{MBP2}\\
- \partial_z\psi_u +\sigma (\psi_u-\mathfrak{h}_u)&=0\quad\text{on }\    \Sigma(u)\label{MBP3}\,.
\end{align}
\end{subequations}
However, a rigorous computation relies on Gau\ss ' theorem  which requires some geometric condition on the boundary of $\Omega(u)$ and the existence of boundary traces for $\nabla\chi_u$, see \cite{Koenig}\footnote{We thank Elmar Schrohe for pointing out this reference.}. Due to the Robin boundary condition the resulting model is consistent in the sense that touching plates again do not lead to a singularity in the equations.

\begin{remark}\label{rem.1}
Of course, if the coincidence set $\mathcal{C}(u)$ is empty, then $\Omega(u)$ is a Lipschitz domain and the derivation of \eqref{MBP0} only requires that $\chi_u$ belongs to $H^2(\Omega(u))$. However, this property is not guaranteed by classical elliptic regularity theory since $\Omega(u)$ is only Lipschitz. In the special case that $D$ is a one-dimensional interval and under  appropriate choices of  $h_u$ and $\mathfrak{h}_u$, we provide a rigorous justification of \eqref{MBP0} in Theorem~\ref{R1}. 
\end{remark}

In Section~\ref{S2} we first list the precise assumptions that we impose on the permittivity $\sigma_\delta$ and on the function $h_{u,\delta}$ defining the boundary values of the electrostatic potential. Moreover, since, as pointed out above, the set $\Omega(u)$ may not be Lipschitz for deformations $u$ with non-empty coincidence set $\mathcal{C}(u)$ and thus standard trace theorems are not valid, we derive in Section~\ref{S2} also boundary trace theorems in weighted spaces for functions in $H^1(\Omega(u))$.
Section~\ref{S3} is dedicated to the computation of the Gamma limit of $(G_\delta)_{\delta\in (0,1)}$ as $\delta\rightarrow 0$, which is the main result of this paper, see Theorem~\ref{P3}. Moreover, we derive in Section~\ref{S3} the limiting equations~\eqref{MBP0}.

From now on, the function $u$ is fixed and assumed to satisfy
\begin{subequations}\label{MothersFinest}
\begin{equation}\label{MothersFinest0}
u\in H_0^1(D)\cap C(\bar{D})\quad\text{with}\quad u\ge -H\ \text{ in }\ D\,,
\end{equation}
and
\begin{equation}\label{MothersFinest1}
\text{$\Omega(u)$ satisfies the segment property}
\end{equation}
\end{subequations}
in the sense of \cite[Definition~10.23]{Leoni17}.

\section{Assumptions and Auxiliary Results}\label{S2}

In this section we state the precise assumptions imposed on the permittivity $\sigma_\delta$ and the function $h_{u,\delta}$ defining the boundary values of the electrostatic potential. We also provide some auxiliary results regarding boundary traces for functions defined on the possibly non-Lipschitz set $\Omega(u)$.

\subsection{Assumptions on  $\sigma_\delta$ and $h_{u,\delta}$}

To inherit in the limit $\delta\rightarrow 0$ the information of the permittivity from the insulating layer, we specifically assume that the permittivity  scales with the layer's thickness; that is, we assume that the permittivity of the device is given in the form
\begin{subequations}\label{sigma}
\begin{equation}\label{sigmad}
 \sigma_\delta(x,z):=  \left\{ \begin{array}{ll}
\delta\sigma (x,z)\,, & (x,z)\in \mathcal{R}_\delta \,, \\
1\,, & (x,z)\in \Omega(u)\,,
\end{array} \right.
\end{equation}
for $\delta\in (0,1)$, where $ \sigma \in C(\bar D\times [-H-1,-H])$ is a fixed function with 
\begin{equation}\label{sigmam}
 \sigma_{max}:=\max_{\bar D\times [-H-1,-H]}  \sigma \,,\qquad \sigma_{min}:=\min_{\bar D\times [-H-1,-H]}  \sigma >0\,.
\end{equation}
\end{subequations}
Regarding the boundary values of the electrostatic potential given in \eqref{e3} we fix two $C^2$-functions
\begin{subequations}\label{bobbybrown}
\begin{equation}\label{bobbybrown2a}
h_b: \bar{D}\times [-H-1,-H]\times [-H,\infty)\rightarrow \R
\end{equation}
and 
\begin{equation}\label{bobbybrown2aa}
h: \bar{D}\times [-H,\infty)\times [-H,\infty)\rightarrow \R
\end{equation}
satisfying
\begin{align}
h_b(x,-H,w)&=h(x,-H,w)\,,\quad (x,w)\in D\times [-H,\infty)\,,\label{bobbybrown2}\\
  \sigma(x,-H)\partial_z h_b(x,-H,w)& = \partial_z h(x,-H,w)\,,\quad (x,w)\in D\times [-H,\infty)\,.\label{bobbybrown3}
\end{align}
\end{subequations}
We then define
\begin{equation}\label{bobbybrown40}
h_\delta(x,z,w):=  \left\{ \begin{array}{ll}
\displaystyle{h_b\left(x,-H+\frac{z+H}{\delta},w\right)}\,, & (x,z,w)\in \bar D\times [-H-\delta,-H)\times [-H,\infty)\,, \\ \vspace{-3mm}
\\
h(x,z,w)\,, & (x,z,w)\in \bar D\times [-H,\infty)\times [-H,\infty)\,,
\end{array} \right.
\end{equation}
and observe that, by \eqref{bobbybrown}, for $(x,w)\in \bar D\times  [-H,\infty)$,
\begin{equation}\label{bobbybrown42}
\begin{split}
\lim_{z\searrow -H} h_\delta(x,z,w)&=\lim_{z\nearrow -H} h_\delta(x,z,w)\,,\\
\lim_{z\searrow -H} \sigma_\delta(x,z)\partial_z h_\delta(x,z,w)&=\lim_{z\nearrow -H}\sigma_\delta(x,z)\partial_z h_\delta(x,z,w)\,.
\end{split}
\end{equation}
In the following, we shall also use the abbreviations
\begin{equation}\label{h000}
h_{u,\delta}(x,z):=h_\delta(x,z,u(x))\,,\quad (x,z)\in \Omega_\delta(u)\,,
\end{equation}
and 
\begin{equation}\label{h00}
h_{u}(x,z):=h(x,z,u(x))\,,\quad (x,z)\in \Omega(u)\,.
\end{equation}
Then \eqref{bobbybrown42} entails
\begin{equation}\label{bobbybrown44}
\llbracket h_{u,\delta} \rrbracket = \llbracket \sigma_\delta \partial_z h_{u,\delta}\rrbracket =0\quad\text{on }\  \Sigma(u)\,.
\end{equation}
Furthermore, we set
\begin{equation}\label{h0}
\mathfrak{h}_u(x,-H):=h_b(x,-H-1,u(x))\,,\quad x\in \bar D\,.
\end{equation}

\subsection{Traces in $H^1(\Omega(u))$}

As pointed out already in the introduction, the region $\Omega(u)$ need not be Lipschitz (besides not being connected) when the elastic plate touches the insulating layer; that is, when $\mathcal{C}(u)\ne \emptyset$. That there is still a meaningful definition of boundary traces on $D\setminus\mathcal{C}(u)$ for functions in $H^1(\Omega(u))$ in this case is the content of the subsequent result. We follow \cite{MNP99},  exploiting the special geometry of $\Omega(u)$ to show that traces are well-defined in weighted spaces.

\begin{lemma}\label{lemT1H} Suppose \eqref{MothersFinest} and set $M_u:= \|H+u\|_{L_\infty(D)}$.
	\begin{itemize}
	\item [{\bf (a)}] There exists a bounded linear operator 
	\begin{equation*}
	\gamma_u\in \mathcal{L}\Big( H^1(\Omega(u)),L_2\big( D\setminus\mathcal{C}(u),(H+u)\rd x \big) \Big)
	\end{equation*} 
	such that $\gamma_u\vartheta = \vartheta(\cdot, u)$ for $\vartheta\in C^1\big( \overline{\Omega(u)} \big) $ and 
	\begin{equation}
	\int_{D\setminus\mathcal{C}(u)} |\gamma_u\vartheta|^2 (H+u)\ \mathrm{d}x \le \|\vartheta\|_{L_2(\Omega(u))}^2 + 2 M_u \|\vartheta\|_{L_2(\Omega(u))} \|\partial_z \vartheta\|_{L_2(\Omega(u))}\ . \label{t1P}
	\end{equation}
	\item [{\bf (b)}] There exists a bounded linear operator 
	\begin{equation*}
	\gamma_b\in \mathcal{L}\Big( H^1(\Omega(u)),L_2\big( D\setminus\mathcal{C}(u),(H+u)\rd x \big) \Big)
	\end{equation*} 
	such that $\gamma_b\vartheta = \vartheta(\cdot, -H)$ for $\vartheta\in C^1\big( \overline{\Omega(u)} \big) $ and 
	\begin{equation}
	\int_{D\setminus\mathcal{C}(u)} |\gamma_b\vartheta|^2 (H+u)\ \mathrm{d}x \le \|\vartheta\|_{L_2(\Omega(u))}^2 + 2 M_u \|\vartheta\|_{L_2(\Omega(u))} \|\partial_z \vartheta\|_{L_2(\Omega(u))}\ . \label{t2P}
	\end{equation}
	\end{itemize}
\end{lemma}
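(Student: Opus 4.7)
The plan is to establish both estimates first on the dense subset $C^1(\overline{\Omega(u)}) \subset H^1(\Omega(u))$ by an elementary fundamental-theorem-of-calculus argument, and then extend the traces by continuity using the density permitted by the segment property \eqref{MothersFinest1}.

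For part (a), I would start from a smooth $\vartheta \in C^1(\overline{\Omega(u)})$ and, for each $x \in D \setminus \mathcal{C}(u)$ (so that $-H < u(x)$), write
\begin{equation*}
\vartheta(x,u(x))^2 = \vartheta(x,z)^2 + 2\int_z^{u(x)} \vartheta(x,\zeta)\,\partial_z\vartheta(x,\zeta)\,\rd\zeta
\end{equation*}
for every $z \in (-H,u(x))$. Averaging this identity with respect to $z$ over the interval $(-H,u(x))$ produces the weight $(H+u(x))$ on the left-hand side and yields
\begin{equation*}
(H+u(x))\,\vartheta(x,u(x))^2 = \int_{-H}^{u(x)} \vartheta(x,z)^2 \,\rd z + 2 \int_{-H}^{u(x)}\!\!\int_z^{u(x)} \vartheta(x,\zeta)\,\partial_z\vartheta(x,\zeta)\,\rd\zeta\,\rd z.
\end{equation*}
Bounding the inner integral crudely by $\int_{-H}^{u(x)} |\vartheta\,\partial_z\vartheta|\,\rd\zeta$ (which is independent of $z$) and using $H+u(x) \le M_u$ trades the outer $z$-integration for the factor $M_u$. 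Integrating over $x \in D \setminus \mathcal{C}(u)$, applying Fubini, and using Cauchy--Schwarz in $L_2(\Omega(u))$ yields
\begin{equation*}
\int_{D \setminus \mathcal{C}(u)} |\vartheta(x,u(x))|^2 (H+u)\,\rd x \le \|\vartheta\|_{L_2(\Omega(u))}^2 + 2 M_u \|\vartheta\|_{L_2(\Omega(u))} \|\partial_z\vartheta\|_{L_2(\Omega(u))},
\end{equation*}
which is precisely \eqref{t1P} for smooth $\vartheta$.

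To pass to a bounded linear operator on $H^1(\Omega(u))$, I would invoke the density of (the restrictions of) $C^1(\overline{\Omega(u)})$ in $H^1(\Omega(u))$, which is guaranteed by the segment property assumed in \eqref{MothersFinest1} (see, e.g., \cite[Chapter~11]{Leoni17}). The above estimate shows that the map $\vartheta \mapsto \vartheta(\cdot,u)$ is uniformly continuous from this dense subset, equipped with the $H^1$-norm, into the weighted space $L_2(D \setminus \mathcal{C}(u),(H+u)\,\rd x)$, so it admits a unique bounded linear extension $\gamma_u$ satisfying \eqref{t1P} on all of $H^1(\Omega(u))$.

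Part (b) is strictly parallel: starting from $\vartheta(x,-H)^2 = \vartheta(x,z)^2 - 2\int_{-H}^z \vartheta\,\partial_z\vartheta\,\rd\zeta$ for smooth $\vartheta$, averaging over $z \in (-H,u(x))$, and performing the same $L_2$ estimation leads to \eqref{t2P}, after which density again provides the extension $\gamma_b$. The central point is that the weight $(H+u)$ degenerates exactly on $\mathcal{C}(u)$, where the geometry of $\Omega(u)$ may form cusps, so although classical traces need not exist on such boundary pieces, the weighted trace is perfectly well defined. The one step that is \emph{not} entirely automatic is the density of smooth functions in $H^1(\Omega(u))$ in the absence of Lipschitz regularity; this is where the segment property does the essential work and is the only place the (possibly wild) geometry of $\Omega(u)$ enters the argument.
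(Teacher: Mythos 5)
Your proposal is correct and follows essentially the same route as the paper: the fundamental theorem of calculus in the vertical variable, integration over $z\in(-H,u(x))$ to produce the weight $(H+u)$, a Cauchy--Schwarz estimate yielding the factor $2M_u\|\vartheta\|_{L_2(\Omega(u))}\|\partial_z\vartheta\|_{L_2(\Omega(u))}$, and extension by density of $C^1\big(\overline{\Omega(u)}\big)$ in $H^1(\Omega(u))$ via the segment property. The only cosmetic difference is that you apply Cauchy--Schwarz once over $\Omega(u)$ where the paper applies H\"older first in $z$ and then in $x$, which gives the same constant.
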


\begin{proof}
{\bf (a)} Let $\vartheta\in C^1\big( \overline{\Omega(u)} \big)$. For $x\not\in\mathcal{C}(u)$ and $z\in (-H,u(x))$, it follows from H\"older's inequality that
\begin{align*}
\vartheta(x,u(x))^2 & = \vartheta(x,z)^2 + 2 \int_z^{u(x)} \vartheta(x,z_*) \partial_z \vartheta(x,z_*)\ \mathrm{d}z_* \\ 
& \le \vartheta(x,z)^2 + 2 \left( \int_{-H}^{u(x)} |\vartheta(x,z_*)|^2\ \mathrm{d}z_* \right)^{1/2} \left( \int_{-H}^{u(x)} |\partial_z\vartheta(x,z_*)|^2\ \mathrm{d}z_* \right)^{1/2}\ .
\end{align*}
Hence,
\begin{align*}
& (H+u)(x)  \vartheta(x,u(x))^2\\
& \qquad \le \int_{-H}^{u(x)} \vartheta(x,z)^2\ \mathrm{d}z  \\
& \qquad\qquad + 2 (H+u)(x) \left( \int_{-H}^{u(x)} |\vartheta(x,z_*)|^2\ \mathrm{d}z_* \right)^{1/2} \left( \int_{-H}^{u(x)} |\partial_z\vartheta(x,z_*)|^2\ \mathrm{d}z_* \right)^{1/2}\,.
\end{align*}
We use once more H\"older's inequality to obtain
\begin{equation}
\int_{D\setminus\mathcal{C}(u)} (H+u)(x) \vartheta(x,u(x))^2\ \mathrm{d}x \le \|\vartheta\|_{L_2(\Omega(u))}^2 + 2 M_u \|\vartheta\|_{L_2(\Omega(u))} \|\partial_z\vartheta\|_{L_2(\Omega(u))}\ . \label{t6P}
\end{equation}
Owing to \eqref{MothersFinest1}, the space $C^1\big( \overline{\Omega(u)} \big)$  is dense in $H^1(\Omega(u))$ according to \cite[Theorem~10.29]{Leoni17} or \cite[II.Theorem~3.1]{Necas67}. We then infer from \eqref{t6P} that the mapping $\vartheta\mapsto \vartheta(\cdot,u)$ from $C^1\big( \overline{\Omega(u)} \big)$ to $L_2\big( D\setminus\mathcal{C}(u),(H+u)\rd x \big)$ extends by density  to a linear bounded operator $\gamma_u$ from $H^1(\Omega(u))$ to $L_2(D\setminus\mathcal{C}(u), (H+u)\rd x)$ and which satisfies \eqref{t6P}. 

\noindent {\bf (b)} The proof being similar to that of~(a), we omit it here.
\end{proof}

For simplicity, we use the notation 
\begin{equation*}
\vartheta(x,u) := \gamma_u\vartheta(x)\,, \quad \vartheta(x,-H) := \gamma_b\vartheta(x)\,, \qquad x \in D\setminus\mathcal{C}(u)\,, \ \vartheta \in H^1(\Omega(u))\,.
\end{equation*}\

Next,  we introduce $H_B^1(\Omega(u))$ as the closure in $H^1(\Omega(u))$ of the set
\begin{equation*}
\begin{split}
C_B^1(\overline{\Omega(u)}):=\Big\{\vartheta\in C^1(\overline{\Omega(u)})\: \ 
&\vartheta(x,u(x))=0\,,\ x\in D\\
& \text{ and }\vartheta(x,z)=0\,,\ (x,z)\in \partial D\times (-H,0] \Big\}\,.
\end{split}
\end{equation*}
Since $\vartheta(x,u(x))=\vartheta(x,-H)=0$ for $x\in \mathcal{C}(u)$ and $\vartheta\in C_B^1(\overline{\Omega(u)})$, we agree upon setting $\vartheta(x,u(x))=\vartheta(x,-H):=0$ for all $x\in \mathcal{C}(u)$ and $\vartheta\in H_B^1(\Omega(u))$ in the reminder of this paper. For functions in $H_B^1(\Omega(u))$ we  derive a Poincar\'e inequality and improve the information on the trace along $\Sigma$ from Lemma~\ref{lemT1H}:

\begin{lemma}\label{lemT2H}
 Suppose \eqref{MothersFinest} and let  $\vartheta\in H_B^1(\Omega(u))$. Then
\begin{equation}
\|\vartheta\|_{L_2(\Omega(u))} \le 2 \|H+u\|_{L_\infty(D)} \|\partial_z \vartheta\|_{L_2(\Omega(u))}\,, \label{t3P}
\end{equation}
and the trace $\vartheta\mapsto \vartheta(\cdot,-H)$ yields a bounded linear operator from $H_B^1(\Omega(u))$ to $L_2(D)$ with
\begin{equation}
\|\vartheta(\cdot,-H)\|_{L_2(D)}^2 \le 2  \|\vartheta\|_{L_2(\Omega(u))} \|\partial_z \vartheta\|_{L_2(\Omega(u))}\ . \label{t4P}
\end{equation}
\end{lemma}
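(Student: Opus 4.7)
The plan is to establish both inequalities first for $\vartheta\in C_B^1(\overline{\Omega(u)})$ by fiberwise computations in the $z$-variable, and then pass to the limit by density in $H_B^1(\Omega(u))$.

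For the Poincar\'e inequality \eqref{t3P}, I would fix $\vartheta\in C_B^1(\overline{\Omega(u)})$ and exploit the vanishing condition $\vartheta(x,u(x))=0$ on the top boundary. For $x\in D\setminus\mathcal{C}(u)$ and $z\in(-H,u(x))$, writing $\vartheta(x,z) = -\int_z^{u(x)}\partial_z\vartheta(x,z_*)\,\mathrm{d}z_*$ and applying the Cauchy--Schwarz inequality yields
\begin{equation*}
|\vartheta(x,z)|^2 \le (H+u(x))\int_{-H}^{u(x)} |\partial_z\vartheta(x,z_*)|^2\,\mathrm{d}z_*\,.
\end{equation*}
Integrating this estimate in $z$ over $(-H,u(x))$ produces a factor $(H+u(x))^2$ on the right-hand side, which is bounded by $\|H+u\|_{L_\infty(D)}^2$; a subsequent integration over $D\setminus\mathcal{C}(u)$ then gives \eqref{t3P} (in fact with constant $1$, so the stated $2$ is comfortable). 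The extension to arbitrary $\vartheta\in H_B^1(\Omega(u))$ is immediate once the density of $C_B^1(\overline{\Omega(u)})$ in $H_B^1(\Omega(u))$ (built into the definition of $H_B^1(\Omega(u))$) is invoked.

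For the refined trace inequality \eqref{t4P}, the idea is to use the fundamental theorem of calculus on $\vartheta^2$ along vertical segments. Again for $\vartheta\in C_B^1(\overline{\Omega(u)})$ and $x\in D\setminus\mathcal{C}(u)$, I would write
\begin{equation*}
\vartheta(x,u(x))^2 - \vartheta(x,-H)^2 = 2\int_{-H}^{u(x)} \vartheta(x,z)\partial_z\vartheta(x,z)\,\mathrm{d}z\,.
\end{equation*}
Since $\vartheta(x,u(x))=0$, this gives the pointwise bound $|\vartheta(x,-H)|^2 \le 2\int_{-H}^{u(x)}|\vartheta\,\partial_z\vartheta|(x,z)\,\mathrm{d}z$. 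Integrating over $D\setminus\mathcal{C}(u)$ (and using the convention $\vartheta(\cdot,-H)\equiv 0$ on $\mathcal{C}(u)$) followed by the Cauchy--Schwarz inequality on $\Omega(u)$ delivers
\begin{equation*}
\|\vartheta(\cdot,-H)\|_{L_2(D)}^2 \le 2\|\vartheta\|_{L_2(\Omega(u))}\|\partial_z\vartheta\|_{L_2(\Omega(u))}\,.
\end{equation*}
The density of $C_B^1(\overline{\Omega(u)})$ in $H_B^1(\Omega(u))$, combined with the previously established trace operator $\gamma_b$ from Lemma~\ref{lemT1H}(b) (which ensures compatibility of the limit with the weighted trace), then extends the inequality to all $\vartheta\in H_B^1(\Omega(u))$ and shows that $\vartheta\mapsto\vartheta(\cdot,-H)$ maps $H_B^1(\Omega(u))$ continuously into $L_2(D)$ (no longer just into the weighted space).

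The main subtle point is neither inequality itself --- both are fiberwise one-dimensional identities tightened by Cauchy--Schwarz --- but rather the correct handling of the coincidence set $\mathcal{C}(u)$: one must verify that the convention $\vartheta(\cdot,-H)\equiv 0$ on $\mathcal{C}(u)$ for $\vartheta\in H_B^1(\Omega(u))$ is consistent with the operator $\gamma_b$ from Lemma~\ref{lemT1H}(b) and that the improved (unweighted) integrability over all of $D$ follows by passing to the limit along an approximating sequence in $C_B^1(\overline{\Omega(u)})$ --- the boundary-vanishing condition baked into $C_B^1$ is exactly what upgrades the weighted trace control of Lemma~\ref{lemT1H} to a genuine $L_2(D)$ trace on the interface $\Sigma$.
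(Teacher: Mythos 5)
Your proposal is correct and follows essentially the same route as the paper: fiberwise fundamental-theorem-of-calculus identities along vertical segments, tightened by Cauchy--Schwarz, followed by density of $C_B^1\big(\overline{\Omega(u)}\big)$ in $H_B^1(\Omega(u))$. The only (harmless) difference is in \eqref{t3P}, where you apply Cauchy--Schwarz to $\vartheta(x,z)=-\int_z^{u(x)}\partial_z\vartheta(x,z_*)\,\mathrm{d}z_*$ directly and thereby obtain the constant $1$ rather than $2$, whereas the paper integrates the identity for $\vartheta^2$; your argument for \eqref{t4P} coincides with the paper's.
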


\begin{proof}
Consider first $\vartheta\in C_B^1\big( \overline{\Omega(u)} \big)$. Since $\vartheta(x,u(x))=0$ for $x\in D$, it follows from H\"older's inequality that, for $x\not\in \mathcal{C}(u)$ and $z\in (-H,u(x))$,
\begin{align}
|\vartheta(x,z)|^2 & = |\vartheta(x,u(x))|^2 - 2 \int_z^{u(x)} \vartheta(x,z_*) \partial_z \vartheta(x,z_*)\ \mathrm{d}z_* \nonumber\\
& \le 2 \left( \int_{-H}^{u(x)} |\vartheta(x,z_*)|^2\ \mathrm{d}z_* \right)^{1/2} \left( \int_{-H}^{u(x)} |\partial_z\vartheta(x,z_*)|^2\ \mathrm{d}z_* \right)^{1/2} \ . \label{t5P}
\end{align}
Consequently, using again H\"older's inequality gives
\begin{align*}
& \|\vartheta\|_{L_2(\Omega(u))}^2 = \int_{D\setminus \mathcal{C}(u)} \int_{-H}^{u(x)} \vartheta(x,z)^2\ \mathrm{d}z \mathrm{d}x \\
& \qquad \le 2 \int_{D\setminus \mathcal{C}(u)} (H+u)(x) \left( \int_{-H}^{u(x)} |\vartheta(x,z_*)|^2\ \mathrm{d}z_* \right)^{1/2} \left( \int_{-H}^{u(x)} |\partial_z\vartheta(x,z_*)|^2\ \mathrm{d}z_* \right)^{1/2} \ \mathrm{d}x \\
& \qquad \le 2 \|H+u\|_{L_\infty(D)} \|\vartheta\|_{L_2(\Omega(u))} \|\partial_z \vartheta\|_{L_2(\Omega(u))}\ .
\end{align*}
Hence, 
\begin{equation*}
\|\vartheta\|_{L_2(\Omega(u))} \le 2 \|H+u\|_{L_\infty(D)} \|\partial_z \vartheta\|_{L_2(\Omega(u))}\ ,
\end{equation*}
and we complete the proof of \eqref{t3P} by a density argument.

Next, consider again $\vartheta\in C_B^1\big( \overline{\Omega(u)} \big)$ and $x\not\in \mathcal{C}(u)$. We infer from \eqref{t5P} with $z=-H$ that
\begin{equation*}
\vartheta(x,-H)^2 \le 2 \left( \int_{-H}^{u(x)} |\vartheta(x,z_*)|^2\ \mathrm{d}z_* \right)^{1/2} \left( \int_{-H}^{u(x)} |\partial_z\vartheta(x,z_*)|^2\ \mathrm{d}z_* \right)^{1/2} \ .
\end{equation*} 
Since $\vartheta(x,-H)=0$ for $x\in\mathcal{C}(u)$, we use once more H\"older's inequality to obtain
\begin{align*}
\int_D \vartheta(x,-H)^2\ \mathrm{d}x & \le 2 \int_D \left( \int_{-H}^{u(x)} |\vartheta(x,z_*)|^2\ \mathrm{d}z_* \right)^{1/2} \left( \int_{-H}^{u(x)} |\partial_z\vartheta(x,z_*)|^2\ \mathrm{d}z_* \right)^{1/2}\ \mathrm{d}x \\
& \le 2 \|\vartheta\|_{L_2(\Omega(u))} \|\partial_z \vartheta\|_{L_2(\Omega(u))}\ ,
\end{align*}
which shows \eqref{t4P} for $\vartheta\in C_B^1\big( \overline{\Omega(u)} \big)$. We again complete the proof by a density argument.
\end{proof}


\section{The Reinforced Limit}\label{S3}

As announced in the introduction we shall derive the  limiting equations of \eqref{TMP} as $\delta\rightarrow 0$ when assuming the reinforced limit \eqref{sigma} on the permittivity. For this we first compute the Gamma limit of the functionals $(G_\delta)_{\delta\in (0,1)}$ and then study the behavior of the corresponding minimizers.

\subsection{The Gamma Limit of the Electrostatic Energy}

Fix $M\ge \|u\|_{L_\infty(D)}+H$, so that
\begin{equation}\label{ufix} 
-H\le u(x)\le M-H\,,\quad x\in D\,,
\end{equation}
and set 
$$
\Omega_M:=D\times (-H-1,M)\,.
$$ 
Define for $\delta\in (0,1)$
$$
G_\delta[\vartheta]:=\left\{\begin{array}{ll} \dfrac{1}{2}\displaystyle\int_{ \Omega_\delta(u)} \sigma_\delta \vert\nabla (\vartheta+h_{u,\delta})\vert^2\,\rd (x,z)\,, & \vartheta\in H_0^1(\Omega_\delta(u))\,,\\
\infty\,, & \vartheta\in L_2(\Omega_M)\setminus H_0^1(\Omega_\delta(u))\,,
\end{array}\right.
$$
with $h_{u,\delta}$ given in \eqref{h000}.
Also, for $\vartheta\in H_B^1(\Omega(u))$, we set
\begin{equation}\label{G}
G[\vartheta]:= \frac{1}{2}\int_{ \Omega(u)}   \big\vert\nabla (\vartheta+h_u)\big\vert^2\,\rd (x,z) + \frac{1}{2}\int_{ D} \big(\sigma \big\vert \vartheta+h_u-\mathfrak{h}_u\big\vert^2\big)(x,-H)\,\rd x \,,
\end{equation}
with $h_u$ and $\mathfrak{h}_u$ defined in \eqref{h00} and \eqref{h0}, respectively, and
$$
G[\vartheta]:= \infty\,,\quad \vartheta\in L_2(\Omega_M)\setminus H_B^1(\Omega(u))\,.
$$
Then the main result of the present paper is the following convergence. 

\begin{theorem}\label{P3}
Suppose \eqref{MothersFinest} and \eqref{bobbybrown}-\eqref{bobbybrown40}. Then
$$
\Gamma-\lim_{\delta\rightarrow 0} G_\delta =G\quad\text{in }\ L_2(\Omega_M)\,.
$$
\end{theorem}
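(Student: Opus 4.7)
The plan is to verify the two standard properties of $\Gamma$-convergence. For the liminf inequality, let $\vartheta_\delta\to\vartheta$ in $L_2(\Omega_M)$ with $\liminf G_\delta[\vartheta_\delta]<\infty$; after passing to a subsequence I may assume $\vartheta_\delta\in H_0^1(\Omega_\delta(u))$. I would split the analysis into contributions from the free space $\Omega(u)$ and from the layer $\mathcal{R}_\delta$. In $\Omega(u)$ the energy bound gives $\nabla(\vartheta_\delta+h_u)$ bounded in $L_2(\Omega(u))$, so $\vartheta_\delta\rightharpoonup\vartheta$ weakly in $H^1(\Omega(u))$; the weighted trace operator of Lemma~\ref{lemT1H}, together with the zero Dirichlet values of $\vartheta_\delta$ on $\{(x,u(x)):x\in D\}$ and $\partial D\times(-H,0]$, identifies $\vartheta\in H_B^1(\Omega(u))$, while weak lower semicontinuity of the quadratic Dirichlet integral yields the free-space half of the bound.

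To treat the layer I would rescale $\zeta=-H+(z+H)/\delta$, set $\tilde\vartheta_\delta(x,\zeta):=\vartheta_\delta(x,-H+\delta(\zeta+H))$ and $\tilde h_{b,u}(x,\zeta):=h_b(x,\zeta,u(x))$, and rewrite the layer contribution of $G_\delta$ as
\begin{equation*}
\frac12\int_{D\times(-H-1,-H)}\sigma(x,z_\delta(\zeta))\,\bigl|\partial_\zeta(\tilde\vartheta_\delta+\tilde h_{b,u})\bigr|^2\,d\zeta\,dx+\frac{\delta^2}{2}\int_{D\times(-H-1,-H)}\sigma\,\bigl|\nabla_x(\tilde\vartheta_\delta+\tilde h_{b,u})\bigr|^2\,d\zeta\,dx,
\end{equation*}
where $z_\delta(\zeta):=-H+\delta(\zeta+H)\to -H$ uniformly. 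The $\delta^2$-term is negligible in the limit, and the first term controls $\partial_\zeta\tilde\vartheta_\delta$ in $L_2$; combined with the one-dimensional Poincar\'e inequality in $\zeta$ (valid since $\tilde\vartheta_\delta(\cdot,-H-1)=0$), this also controls $\tilde\vartheta_\delta$ in $L_2(D\times(-H-1,-H))$. Extracting a subsequence yields a weak limit $\tilde\vartheta$ in $L_2(D;H^1((-H-1,-H)))$; the endpoint traces satisfy $\tilde\vartheta(\cdot,-H-1)=0$ and $\tilde\vartheta(\cdot,-H)=\vartheta(\cdot,-H)$, the latter identification following from Lemma~\ref{lemT2H} applied to $\vartheta_\delta|_{\Omega(u)}\in H_B^1(\Omega(u))$. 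Weak lower semicontinuity with the uniformly convergent coefficient $\sigma(x,z_\delta(\zeta))\to\sigma(x,-H)$, followed by the pointwise inequality $\int_{-H-1}^{-H}|\partial_\zeta f|^2\,d\zeta\ge|f(-H)-f(-H-1)|^2$ and an appeal to \eqref{bobbybrown2} to match the endpoint values of $\tilde\vartheta+\tilde h_{b,u}$ with $\vartheta(x,-H)+h_u(x,-H)$ and $\mathfrak{h}_u(x,-H)$, then furnishes the layer bound $\int_D\sigma(x,-H)\,|\vartheta+h_u-\mathfrak{h}_u|^2(x,-H)\,dx$.

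For the recovery sequence at $\vartheta\in H_B^1(\Omega(u))$, I would set $\vartheta_\delta=\vartheta$ in $\Omega(u)$ and, in the layer, $\vartheta_\delta:=\psi_\delta^{\mathrm{rec}}-h_{u,\delta}$ with the linear interpolation
\begin{equation*}
\psi_\delta^{\mathrm{rec}}(x,z):=\mathfrak{h}_u(x,-H)+\tfrac{z+H+\delta}{\delta}\bigl(\vartheta(x,-H)+h_u(x,-H)-\mathfrak{h}_u(x,-H)\bigr).
\end{equation*}
By construction $\vartheta_\delta$ vanishes at $z=-H-\delta$ and the traces across $\Sigma(u)$ agree; a standard cut-off $\chi_\epsilon(x)$ near $\partial D$ followed by density of $C_B^1(\overline{\Omega(u)})$ in $H_B^1(\Omega(u))$ (with a diagonal argument $\epsilon=\epsilon(\delta)\to 0$) delivers a sequence in $H_0^1(\Omega_\delta(u))$. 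Because $\partial_z\psi_\delta^{\mathrm{rec}}=\delta^{-1}(\vartheta(\cdot,-H)+h_u(\cdot,-H)-\mathfrak{h}_u(\cdot,-H))$ is $z$-independent while the $x$-derivatives are $L_\infty$-bounded and weighted by $\sigma_\delta=\delta\sigma$, the layer energy equals $\tfrac12\int_D|\vartheta+h_u-\mathfrak{h}_u|^2(x,-H)\cdot\tfrac{1}{\delta}\int_{-H-\delta}^{-H}\sigma(x,z)\,dz\,dx+O(\delta)$, which converges to $\tfrac12\int_D\sigma(x,-H)\,|\vartheta+h_u-\mathfrak{h}_u|^2(x,-H)\,dx$. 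Combined with the tautological equality in $\Omega(u)$, this gives $\limsup G_\delta[\vartheta_\delta]\le G[\vartheta]$; convergence in $L_2(\Omega_M)$ holds since the layer extension has $L_2$-norm of order $\sqrt\delta$.

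The hard part will be the layer liminf, specifically the identification of the weak limit $\tilde\vartheta$ and of its trace at $\zeta=-H$ with the weighted $L_2$-trace $\vartheta(\cdot,-H)$ furnished by Lemma~\ref{lemT2H}. This step is delicate because $\Omega(u)$ need not be Lipschitz when $\mathcal{C}(u)\ne\emptyset$, so every trace statement on the $\Omega(u)$ side must be routed through the weighted operators of Lemmas~\ref{lemT1H}--\ref{lemT2H} rather than the classical trace theorem; the cut-off and density steps in the recovery construction must also remain compatible with these weighted traces.
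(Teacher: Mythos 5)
Your proposal is correct in substance and follows the same overall strategy as the paper: the liminf inequality combines weak lower semicontinuity of the Dirichlet integral on $\Omega(u)$ with a one-dimensional Cauchy--Schwarz estimate across the layer that produces the boundary term, with all traces routed through the weighted operators of Lemmas~\ref{lemT1H} and~\ref{lemT2H}; the recovery sequence is an affine-in-$z$ interpolation in $\mathcal{R}_\delta$ corrected by a cut-off near $\partial D$. Two points differ in execution. First, for the layer liminf the paper does not rescale and extract a weak limit $\tilde\vartheta$: it applies the inequality $\vert(\vartheta_\delta+h_{u,\delta})(x,-H)-h_{u,\delta}(x,-H-\delta)\vert^2\le\delta\int_{-H-\delta}^{-H}\vert\partial_z(\vartheta_\delta+h_{u,\delta})\vert^2\,\rd z$ directly for each $\delta$ and then passes to the limit using the strong trace convergence $\vartheta_\delta(\cdot,-H)\to\vartheta_0(\cdot,-H)$ in $L_2(D)$ supplied by \eqref{t4P}; your compactness route is valid but longer. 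Second, and more importantly, your interpolant carries the trace $\vartheta(\cdot,-H)$, which for general $\vartheta\in H_B^1(\Omega(u))$ is only an $L_2(D)$ function, so $\nabla_x\psi_\delta^{\mathrm{rec}}$ need not exist and you are forced into the density-plus-diagonal argument; the paper instead interpolates with the reflection $\bar\vartheta(x,z)=\vartheta(x,-2H-z)$, which belongs to $H^1$ of a fixed domain, so the construction works for every $\vartheta\in H_B^1(\Omega(u))$ at once and the term $\delta\int_{\mathcal{R}_\delta}\vert\nabla_x\bar\vartheta\vert^2\,\rd(x,z)$ vanishes for free. Finally, be careful with the cut-off: if $\chi_\epsilon$ multiplies the whole layer function $\psi_\delta^{\mathrm{rec}}-h_{u,\delta}$, the trace from below at $z=-H$ becomes $\chi_\epsilon\,\vartheta(\cdot,-H)$ and no longer matches the trace $\vartheta(\cdot,-H)$ from $\Omega(u)$ on the collar, destroying membership in $H^1(\Omega_\delta(u))$; the cut-off must be applied only to the $h$-dependent correction terms (the term carrying $\vartheta(\cdot,-H)$, respectively $\bar\vartheta$, already has zero lateral trace because functions in $C_B^1(\overline{\Omega(u)})$ vanish on $\partial D\times(-H,0]$), and its width must be chosen, as in the paper's $\tau_\delta$ with width $\sqrt{\delta}$, so that the contribution of $\delta\,\vert\nabla\chi_\epsilon\vert^2$ over the layer is $o(1)$. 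These are repairs of detail, not of strategy.
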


For more information on Gamma convergence we refer, e.g., to \cite{DaM93}.

\begin{proof}
{\it (i) Asymptotic weak lower semi-continuity.} Considering 
\begin{equation}\label{k1}
\vartheta_\delta\rightarrow \vartheta_0\quad \text{in}\quad L_2(\Omega_M)\,,
\end{equation} 
we shall show that
$$
G[\vartheta_0]\le\liminf_{\delta\rightarrow 0} G_\delta[\vartheta_\delta]\,.
$$
Due to the definitions of the functionals we may assume without loss of generality that $\vartheta_\delta\in H_0^1(\Omega_\delta(u))$ for $\delta\in (0,1)$ and
\begin{equation}\label{1}
\sup_{\delta\in (0,1)} G_\delta[\vartheta_\delta]<\infty\,.
\end{equation}
Therefore, by \eqref{sigma},  \eqref{bobbybrown}, and \eqref{1}, we have
\begin{equation}\label{k2}
\sup_{\delta\in (0,1)}\|\nabla \vartheta_\delta\|_{L_2(\Omega(u))}<\infty\,.
\end{equation}
Thus, invoking \eqref{k1} and \eqref{k2} we may further assume that
\begin{equation}\label{2a}
\vartheta_\delta\rightharpoonup \vartheta_0\quad\text{in }\ H^1(\Omega(u))\,.
\end{equation}
Since $\vartheta_\delta$ belongs to $ H_0^1(\Omega_\delta(u))$, which  is the closure of $C^\infty_c(\Omega_\delta(u))$ in $H^1(\Omega_\delta(u))$, and $C^\infty_c(\Omega_\delta(u))\subset C_B^1(\Omega(u))$, it readily follows from the definitions of $\Omega_\delta(u)$ and $\Omega(u)$ that $\vartheta_\delta$ belongs to $H_B^1(\Omega(u))$, the latter being a closed subspace of $H^1(\Omega(u))$. Thus \eqref{2a} implies that $\vartheta_0\in H_B^1(\Omega(u))$.  Moreover,  \eqref{t4P}, \eqref{k1}, and \eqref{k2} yield
\begin{equation}\label{3}
\vartheta_\delta(\cdot,-H)\rightarrow \vartheta_0 (\cdot,-H)\quad\text{in }\ L_2\big(D\big)\,.
\end{equation}
Next, since, for each $\varepsilon>0$, there is $\delta_\varepsilon\in (0,1)$ such that
$$
\vert\sigma(x,z)-\sigma(x,-H)\vert \le \varepsilon\,,\quad (x,z)\in \mathcal{R}_{{\delta_\varepsilon}}\,,
$$
it follows from \eqref{1}  that
\begin{equation*}
\begin{split}
\liminf_{\delta\rightarrow 0} \, &\delta\int_{\mathcal{R}_\delta }\sigma(x,z)\vert\nabla(\vartheta_\delta+h_{u,\delta})\vert^2\,\rd (x,z)\\
&= \liminf_{\delta\rightarrow 0}\, \delta\int_{\mathcal{R}_\delta }\sigma(x,-H)\vert\nabla(\vartheta_\delta+h_{u,\delta})\vert^2\,\rd (x,z)\\
&\ge \liminf_{\delta\rightarrow 0} \, \delta\int_{\mathcal{R}_\delta }\sigma(x,-H)\vert\partial_z(\vartheta_\delta+h_{u,\delta})\vert^2\,\rd (x,z)\,.
\end{split}
\end{equation*}
The property   $\vartheta_\delta(\cdot,-H-\delta)= 0$ a.e. in $D$ and H\"older's inequality yield
$$
\vert (\vartheta_\delta+h_{u,\delta})(x,-H)-h_{u,\delta}(x,-H-\delta)\vert^2\le \delta \int_{-H-\delta}^{-H}\vert\partial_z(\vartheta_\delta +h_{u,\delta})(x,z)\vert^2\,\rd z
$$
for a.e. $x\in D$ while \eqref{bobbybrown2} and \eqref{bobbybrown40} imply for $x\in D$ (recalling \eqref{h00} and \eqref{h0})
$$
h_{u,\delta}(x,-H)=h_u(x,-H)\,,\qquad h_{u,\delta}(x,-H-\delta)  = \mathfrak{h}_u(x,-H) \,.
$$
Consequently,
\begin{equation*}
\begin{split}
\liminf_{\delta\rightarrow 0} \,\delta &\int_{\mathcal{R}_\delta }\sigma(x,z)\vert\nabla(\vartheta_\delta+h_{u,\delta})\vert^2\,\rd (x,z)\\
&\ge \liminf_{\delta\rightarrow 0} \int_D\sigma(x,-H)\big\vert\vartheta_\delta(x,-H)+h_u(x,-H)-\mathfrak{h}_u(x,-H)\big\vert^2\,\rd x\\
&=\int_{D}\sigma(x,-H)\big\vert\vartheta_0(x,-H)+h_u(x,-H)-\mathfrak{h}_u(x,-H)\big\vert^2\,\rd x\,,
\end{split}
\end{equation*}
where we used \eqref{3} and $\vartheta_\delta(x,-H)=0$, $x\in \mathcal{C}(u)$ (since $\vartheta_\delta\in H_0^1(\Omega_\delta(u))$)  to derive the last equality. Since $h_{u,\delta}=h_{u}$ and $\sigma_\delta=1 $  in $\Omega(u)$ it follows from \eqref{2a} that
$$
\dfrac{1}{2}\displaystyle\int_{ \Omega(u)}   \vert\nabla (\vartheta_0+h_u)\vert^2\,\rd (x,z)
\le \liminf_{\delta\rightarrow 0} \dfrac{1}{2}\displaystyle\int_{ \Omega(u)} \sigma_\delta \vert\nabla (\vartheta_\delta+h_{u,\delta})\vert^2\,\rd (x,z)\,.
$$
Therefore, gathering the last two inequalities gives
$$
\liminf_{\delta\rightarrow 0} G_\delta[\vartheta_\delta]\ge G[\vartheta_0]\,,
$$
and thus the weak lower semi-continuity of the functionals $(G_\delta)_{\delta\in (0,1)}$ follows.\\

{\it (ii) Recovery sequence.} To prove the existence of a recovery sequence it suffices, by definition of the functionals $(G_\delta)_{\delta\in (0,1)}$, to consider $\vartheta\in H_B^1(\Omega(u))$. Let $\bar\vartheta$ denote the trivial extension of $\vartheta$ to $D\times (-H,M)$ and then its reflection to $D\times(-2H-M,M)$; that is,
$$
\bar\vartheta(x,z):= \left\{ \begin{array}{ll} 0\,, & x\in D\,,\ u(x)<z<M\,, \\[0.1cm]
\vartheta(x,z)\,, &  x\in D\,,\ -H<z\le u(x)\,,\\[0.1cm]
\vartheta(x,-2H-z)\,, &  x\in D\,,\ -2H-u(x)<z\le -H\,,\\[0.1cm]
0\,, &  x\in D\,,\ -2H-M<z\le -2H-u(x)\,.\\
\end{array} \right.
$$
Let 
$$
\tau_\delta(x):= \left\{ \begin{array}{ll} 1\,, & d(x,\partial D)>\sqrt{\delta}\,,\\ \vspace{-3mm} \\
\displaystyle\frac{d(x,\partial D)}{\sqrt{\delta}}\,, & d(x,\partial D)\le\sqrt{\delta}\,, \end{array} \right.\qquad x\in D\,,
$$
where $d(\cdot,\partial D)$ denotes the distance to $\partial D$. Since the $C^2$-regularity of the boundary of $D$  implies that $d(\cdot,\partial D)$ is $C^2$ near $\partial D$ (see \cite[Lemma 14.16]{GT01}), we have $\tau_\delta\in H^1(D)$ for $\delta$ small enough. Define now
\begin{equation*}
\begin{split}
\vartheta_\delta(x,z):=&\ \frac{z+H+\delta}{\delta}\bar\vartheta(x,z) +\frac{z+H+\delta}{\delta}\big[ h_{u,\delta}(x,-H)-h_{u,\delta}(x,-H-\delta)\big] \tau_\delta (x)\\
&-\big[ h_{u,\delta}(x,z)-h_{u,\delta}(x,-H-\delta)\big]  \tau_\delta (x)\,,\quad (x,z)\in \mathcal{R}_\delta \,,
\end{split}
\end{equation*}
and
$$
\vartheta_\delta(x,z):=\vartheta(x,z)\,,\quad (x,z)\in \Omega(u)\,.
$$
The regularities of $\vartheta $,  $\bar\vartheta$, and $\tau_\delta$ imply that $\vartheta_\delta\in H^1(\mathcal{R}_\delta )\cap H^1(\Omega(u))$ and thus, since moreover $\llbracket \vartheta_\delta\rrbracket=0$ on $\Sigma(u)$, we deduce $\vartheta_\delta\in H^1(\Omega_\delta(u))$. By construction, it follows that $\vartheta_\delta$ vanishes on $\partial \Omega_\delta(u)$, hence $\vartheta_\delta\in H_0^1(\Omega_\delta(u))$. We now claim that
\begin{equation}\label{RS}
G[\vartheta]=\lim_{\delta\rightarrow 0} G_\delta[\vartheta_\delta]\,.
\end{equation}
Indeed, for $(x,z)\in \mathcal{R}_\delta $ we note that
\begin{equation}\label{p1}
\begin{split}
\partial_z(\vartheta_\delta+h_{u,\delta})(x,z)&=\frac{1}{\delta}\bar\vartheta(x,z) + \frac{1}{\delta}\big[ h_{u,\delta}(x,-H)-h_{u,\delta}(x,-H-\delta)\big] \tau_\delta (x) \\
& \qquad +\frac{z+H+\delta}{\delta}\partial_z\bar\vartheta(x,z)+\big(1-\tau_\delta (x) \big)\partial_z h_{u,\delta}(x,z)\,,
\end{split}
\end{equation}
and then handle the terms separately.
From \eqref{bobbybrown40}  and $\sigma_\delta=\delta\sigma$ in $\mathcal{R}_\delta $ we obtain
\begin{align*}
\int_{\mathcal{R}_\delta }\sigma_\delta&(x,z) \left\vert\frac{1}{\delta}\bar\vartheta(x,z) + \frac{1}{\delta}\big[ h_{u,\delta}(x,-H)-h_{u,\delta}(x,-H-\delta)\big] \tau_\delta (x)\right\vert^2\,\rd (x,z)\nonumber\\
&=
\frac{1}{\delta}\int_{-H-\delta}^{-H}\int_D\sigma(x,z) \left\vert \bar\vartheta(x,z) + \big[ h_{u}(x,-H)-\mathfrak{h}_u(x,-H)\big] \tau_\delta (x)\right\vert^2\,\rd x\rd z\,.
\end{align*}
Thus, recalling the definition of $\tau_\delta$  and using Lebesgue's theorem,
\begin{align}
\lim_{\delta\rightarrow 0}\, & \int_{\mathcal{R}_\delta }\sigma_\delta(x,z) \left\vert\frac{1}{\delta}\bar\vartheta(x,z) + \frac{1}{\delta}\big[ h_{u,\delta}(x,-H)-h_{u,\delta}(x,-H-\delta)\big] \tau_{{\delta}}(x)\right\vert^2\,\rd (x,z)\nonumber\\
&=
\int_D\sigma(x,-H) \left\vert \bar\vartheta(x,-H) +  h_{u}(x,-H)-\mathfrak{h}_u(x,-H) \right\vert^2\,\rd x\,.\label{p2}
\end{align}
Next, we have
\begin{align*}
\int_{\mathcal{R}_\delta }&\sigma_\delta(x,z) \left\vert\frac{z+H+\delta}{\delta}\partial_z\bar\vartheta(x,z)\right\vert^2\,\rd (x,z) \le
\delta \sigma_{max} \int_{-H-\delta}^{-H}\int_D \left\vert \partial_z\bar\vartheta(x,z)\right\vert^2\,\rd x\rd z\,,
\end{align*}
so that
\begin{align}\label{p3}
\lim_{\delta\rightarrow 0}\, \int_{\mathcal{R}_\delta }\sigma_\delta(x,z) \left\vert\frac{z+H+\delta}{\delta}\partial_z\bar\vartheta(x,z)\right\vert^2\,\rd (x,z)=0
\end{align}
since $\bar\vartheta \in H^1(D\times(-2H-M,M))$. 
Moreover, from \eqref{bobbybrown40} it follows that
\begin{equation}\label{pz}
\partial_z h_{u,\delta}(x,z)=\frac{1}{\delta}\partial_z h_{1}\left(x,-H+\frac{z+H}{\delta},u(x)\right)\,,\quad (x,z)\in \mathcal{R}_\delta \,,
\end{equation}
from which we get, using substitution, 
\begin{align*}
\int_{\mathcal{R}_\delta }&\sigma_\delta(x,z) \left\vert\big(1-\tau_\delta (x) \big)\partial_z h_{u,\delta}(x,z)\right\vert^2\,\rd (x,z)\nonumber\\
&\le \sigma_{max}\int_{-H-1}^{-H}\int_D \big\vert \big(1-\tau_\delta (x) \big) \partial_z h_{1}\left(x,\xi,u(x)\right)\big\vert^2\,\rd x\rd \xi\,.
\end{align*}
Hence, by definition of $\tau_\delta$ and Lebesgue's theorem,
\begin{align}\label{p4}
\lim_{\delta\rightarrow 0}\, \int_{\mathcal{R}_\delta }\sigma_\delta(x,z) \left\vert\big(1-\tau_{\delta} (x) \big)\partial_z h_{u,\delta}(x,z)\right\vert^2\,\rd (x,z)=0\,.
\end{align}
Gathering \eqref{p1}, \eqref{p2}, \eqref{p3}, and \eqref{p4} we derive
\begin{equation}\label{p7}
\begin{split}
\lim_{\delta\rightarrow 0}\, &\int_{\mathcal{R}_\delta }\sigma_\delta(x,z) \vert\partial_z(\vartheta_\delta+h_{u,\delta})\vert^2\,\rd (x,z)\\
&= \int_{D}\sigma(x,-H) \left\vert \vartheta(x,-H) +  h_{u}(x,-H)-\mathfrak{h}_u(x,-H) \right\vert^2\,\rd x \,.
\end{split}
\end{equation}
Next, still for $(x,z)\in \mathcal{R}_\delta $, we compute
\begin{equation*}
\begin{split}
\nabla_x\vartheta_\delta(x,z)=&\ \frac{z+H+\delta}{\delta}\nabla_x\bar\vartheta(x,z)\\
& +\frac{z+H+\delta}{\delta}\big[ \nabla_x h_{u,\delta}(x,-H)- \nabla_x h_{u,\delta}(x,-H-\delta)\big] \tau_\delta (x)\\
&+\frac{z+H+\delta}{\delta}\big[ h_{u,\delta}(x,-H)-  h_{u,\delta}(x,-H-\delta)\big]  \nabla_x\tau_\delta (x)\\
&-\big[ \nabla_xh_{u,\delta}(x,z)-\nabla_xh_{u,\delta}(x,-H-\delta)\big]  \tau_\delta (x)\\
&-\big[ h_{u,\delta}(x,z)-h_{u,\delta}(x,-H-\delta)\big]  \nabla_x\tau_\delta (x)\,,
\end{split}
\end{equation*}
where
\begin{equation}\label{haha}
\begin{split}
 \nabla_x h_{u,\delta}(x,z)= &\ \nabla_x h_{1} \left(x,-H+\frac{z+H}{\delta}, u(x)\right)\\
&+\partial_wh_b\left(x,-H+\frac{z+H}{\delta}, u(x)\right)\nabla_x u(x)\,.
\end{split}
\end{equation}
We further note that

$$
0\le \tau_\delta(x)\le 1\,,\qquad 0\le \frac{z+H+\delta}{\delta}\le 1
$$
for $(x,z)\in\mathcal{R}_\delta $. Gathering these observations, recalling that $\sigma_\delta=\delta\sigma$ in $\mathcal{R}_\delta$, and denoting the norm of $h_b$ in $C^1(\bar D\times[-H-1,-H]\times [-H,M])$ by $\| h_b\|_{C^1}$ we deduce
\begin{equation*}
\begin{split}
\int_{\mathcal{R}_\delta }&\sigma_\delta(x,z)\big\vert\nabla_x(\vartheta_\delta+h_{u,\delta})\big\vert^2\rd (x,z)\\
&\le \  c\, \delta \sigma_{max}\int_{\mathcal{R}_\delta } \vert\nabla_x\bar\vartheta(x,z)\vert^2 \, \rd (x,z)\\
&\qquad +c\, \delta \sigma_{max}  \| h_b\|_{C^1}^2 \int_{-H-\delta}^{-H}\int_D \left(1+\vert\nabla_x u(x)\vert^2+\vert \nabla_x\tau_\delta(x)\vert^2\right)\,\rd x\rd z\\
&\le \  c\, \delta \sigma_{max}\int_{\mathcal{R}_\delta } \vert\nabla_x\bar\vartheta(x,z)\vert^2\, \rd (x,z)\\
&\qquad + c\, \delta^2 \sigma_{max} \| h_b\|_{C^1}^2 \int_D   \left(1+\vert\nabla_x u(x)\vert^2+\vert \nabla_x\tau_\delta(x)\vert^2\right) \,\rd x\,.
\end{split}
\end{equation*}
Now, since the distance function $d(\cdot,\partial D)\in C^2$ (see \cite[Lemma 14.16]{GT01}) satisfies the eikonal equation we have
$$
 \vert\nabla_x\tau_\delta(x)\vert\le \frac{1}{\sqrt{\delta}}\,,\quad x\in D\,,
$$
and since $u\in H_0^1(D)$ and $\bar\vartheta \in H^1(D\times(-2H-M,M))$, we deduce that the right-hand side of the above estimate is of order~$\delta$, hence
\begin{equation}\label{p9}
\lim_{\delta\rightarrow 0}\, \int_{\mathcal{R}_\delta }\sigma_\delta(x,z)\big\vert\nabla_x(\vartheta_\delta+h_{u,\delta})\big\vert^2\,\rd (x,z)=0\,.
\end{equation}
Consequently, we derive from \eqref{sigmad}, \eqref{bobbybrown40}, \eqref{p7}, \eqref{p9}, and \eqref{bobbybrown2}
\begin{equation*}
\begin{split}
\lim_{\delta\rightarrow 0} \displaystyle&\int_{ \Omega_\delta(u)} \sigma_\delta \vert\nabla (\vartheta_\delta+h_{u,\delta})\vert^2\,\rd (x,z)\\
=&\ \int_{\Omega(u)}  \big\vert\nabla(\vartheta+h_u)\big\vert^2\,\rd (x,z)\\
&\ +
\int_{D}\sigma(x,-H) \left\vert \vartheta(x,-H) +  h_{u}(x,-H)-\mathfrak{h}_{u}(x,-H) \right\vert^2\,\rd x \\
= &\ 2G[\vartheta];
\end{split}
\end{equation*}
that is, \eqref{RS} since $\vartheta_\delta\in H_0^1(\Omega_\delta(u))$. This proves the assertion.
\end{proof}

\subsection{Minimizers}

Now that we have shown the Gamma convergence of the functionals $(G_\delta)_{\delta\in (0,1)}$ towards $G$, we can deduce useful information on the relation between their minimizers. 
We first recall from the Lax-Milgram theorem (also see \cite[Proposition~3.1, Lemma~3.2]{LW19}) the following result regarding the solvability of the transmission problem~\eqref{TMP}:

\begin{proposition}\label{P1}
Suppose \eqref{MothersFinest} and \eqref{bobbybrown}-\eqref{bobbybrown40}.
For  $\delta\in (0,1)$, there is a unique 
minimizer $\chi_{u,\delta} \in  H_{0}^1(\Omega_\delta(u))$ of the functional $G_\delta$ on $ H_{0}^1(\Omega_\delta(u))$. It satisfies
\begin{equation}\label{est}
\int_{\Omega_\delta(u)} \sigma_\delta\vert\nabla\chi_{u,\delta}\vert^2\,\rd(x,z)\le 4 \int_{\Omega_\delta(u)} \sigma_\delta\vert\nabla h_{u,\delta}\vert^2\,\rd(x,z)\,.
\end{equation}
In addition, $\psi_{u,\delta}:=\chi_{u,\delta}+h_{u,\delta}$ is a variational solution to \eqref{TMP}.
\end{proposition}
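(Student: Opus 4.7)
The plan is to invoke the Lax--Milgram theorem on $H_0^1(\Omega_\delta(u))$, read off the variational formulation of \eqref{TMP} from the Euler--Lagrange equation, and extract \eqref{est} from the minimality property.

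First I would introduce the symmetric bilinear form
\[
a_\delta(\vartheta,\varphi) := \int_{\Omega_\delta(u)} \sigma_\delta \nabla\vartheta\cdot\nabla\varphi\,\rd(x,z)\,, \qquad \vartheta,\varphi\in H_0^1(\Omega_\delta(u))\,.
\]
Continuity is immediate from the pointwise bound $\sigma_\delta \le \max\{1,\sigma_{max}\}$ in \eqref{sigma}, while coercivity on $H_0^1(\Omega_\delta(u))$ follows from the lower bound $\sigma_\delta \ge \delta\sigma_{min}$ combined with Poincar\'e's inequality on the bounded open set $\Omega_\delta(u)$ (extension by zero to a surrounding cylinder reduces this to the Lipschitz case and sidesteps any difficulty caused by a non-empty coincidence set). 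Next, because $h_b$ and $h$ are $C^2$ and match along $\{z=-H\}$ through \eqref{bobbybrown}, one checks that $\sigma_\delta^{1/2}\nabla h_{u,\delta}\in L_2(\Omega_\delta(u))$: on $\Omega(u)$ this is obvious, whereas on $\mathcal{R}_\delta$ the potentially large factor $1/\delta$ in $\partial_z h_{u,\delta}$ is exactly compensated by $\sigma_\delta=\delta\sigma$, and the substitution $\xi=-H+(z+H)/\delta$ yields a bound uniform in $\delta$. Hence $\varphi\mapsto -a_\delta(h_{u,\delta},\varphi)$ is a continuous linear form on $H_0^1(\Omega_\delta(u))$.

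Lax--Milgram then provides a unique $\chi_{u,\delta}\in H_0^1(\Omega_\delta(u))$ with $a_\delta(\chi_{u,\delta},\varphi)=-a_\delta(h_{u,\delta},\varphi)$ for all $\varphi\in H_0^1(\Omega_\delta(u))$, and by strict convexity $\chi_{u,\delta}$ is the unique minimizer of $G_\delta$. Setting $\psi_{u,\delta}:=\chi_{u,\delta}+h_{u,\delta}$, this identity reads
\[
\int_{\Omega_\delta(u)}\sigma_\delta\nabla\psi_{u,\delta}\cdot\nabla\varphi\,\rd(x,z)=0\,,\qquad \varphi\in H_0^1(\Omega_\delta(u))\,,
\]
which is the variational formulation of \eqref{TMP}. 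Testing against $\varphi\in C_c^\infty(\Omega(u))$ and $\varphi\in C_c^\infty(\mathcal{R}_\delta)$ recovers \eqref{e1}; the condition $\psi_{u,\delta}-h_{u,\delta}\in H_0^1(\Omega_\delta(u))$ encodes both the Dirichlet datum \eqref{e3} and the continuity $\llbracket\psi_{u,\delta}\rrbracket=0$ across $\Sigma(u)$; and testing against $\varphi$ whose support meets $\Sigma(u)$ and integrating by parts in each subdomain produces the flux condition $\llbracket\sigma_\delta\partial_z\psi_{u,\delta}\rrbracket=0$ of \eqref{e2}.

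Finally, the estimate \eqref{est} is extracted from the minimality $G_\delta[\chi_{u,\delta}]\le G_\delta[0]$, which after expanding the square and cancelling $\int\sigma_\delta|\nabla h_{u,\delta}|^2$ on both sides reads
\[
\int_{\Omega_\delta(u)}\sigma_\delta|\nabla\chi_{u,\delta}|^2\,\rd(x,z)\le -2\int_{\Omega_\delta(u)}\sigma_\delta\nabla\chi_{u,\delta}\cdot\nabla h_{u,\delta}\,\rd(x,z)\,.
\]
The weighted Cauchy--Schwarz inequality (with weight $\sigma_\delta$) followed by division by $\|\sigma_\delta^{1/2}\nabla\chi_{u,\delta}\|_{L_2}$ then gives $\|\sigma_\delta^{1/2}\nabla\chi_{u,\delta}\|_{L_2}\le 2\|\sigma_\delta^{1/2}\nabla h_{u,\delta}\|_{L_2}$, hence \eqref{est}. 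The only mildly delicate ingredient is the verification that the right-hand side is controlled uniformly in $\delta$ -- a property essential for the Gamma convergence analysis later -- but this is built into the reinforced scaling \eqref{sigmad} matched with the boundary profile \eqref{bobbybrown40}.
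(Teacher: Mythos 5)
Your proposal is correct and follows essentially the same route as the paper, which simply invokes the Lax--Milgram theorem (deferring details to \cite{LW19}): a symmetric continuous coercive bilinear form on $H_0^1(\Omega_\delta(u))$, with coercivity from $\sigma_\delta\ge\min\{1,\delta\sigma_{min}\}>0$ and Poincar\'e via extension by zero, the key integrability check that the factor $1/\delta$ in $\partial_z h_{u,\delta}$ on $\mathcal{R}_\delta$ is compensated by $\sigma_\delta=\delta\sigma$, and \eqref{est} from $G_\delta[\chi_{u,\delta}]\le G_\delta[0]$ plus weighted Cauchy--Schwarz. No gaps worth flagging.
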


As for the functional $G$ we note:

\begin{proposition}\label{lemt0P}
Suppose \eqref{MothersFinest} and \eqref{bobbybrown}. There is a unique minimizer  $\chi_u\in H_B^1(\Omega(u))$ of the functional $G$ on $H_B^1(\Omega(u))$.  It satisfies
\begin{equation*}
	\|\nabla\chi_u\|_{L_2(\Omega(u))}^2 + \|\sqrt{\sigma} \chi_u(\cdot,-H)\|_{L_2(D)}^2 \le 4\|\nabla h_u\|_{L_2(\Omega(u))}^2+4\|\sqrt{\sigma} (h_u-\mathfrak{h}_u)(\cdot,-H)\|_{L_2(D)}^2 \, .
	\end{equation*}
\end{proposition}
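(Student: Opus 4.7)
The plan is to recognize the statement as a standard existence/uniqueness question for a quadratic coercive energy on a Hilbert space, and to prove it by the direct method of the calculus of variations (or, equivalently, by Lax--Milgram applied to the Euler--Lagrange bilinear form). Expanding the squares in~\eqref{G} writes $G[\vartheta]=\tfrac{1}{2}a(\vartheta,\vartheta)+\ell(\vartheta)+c$, where
\[
a(\vartheta,\varphi):=\int_{\Omega(u)}\nabla\vartheta\cdot\nabla\varphi\,\rd(x,z)+\int_D \sigma(x,-H)\,\vartheta(x,-H)\,\varphi(x,-H)\,\rd x
\]
is a symmetric bilinear form on $H_B^1(\Omega(u))$ and $\ell$ is a corresponding linear form involving $h_u$ and $\mathfrak{h}_u$. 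Continuity of $a$ and $\ell$ on $H_B^1(\Omega(u))$ follows from the trace inequality~\eqref{t4P} of Lemma~\ref{lemT2H}, the uniform bound $\sigma\le\sigma_{max}$ from~\eqref{sigmam}, and the fact that $h_u(\cdot,-H)$ and $\mathfrak{h}_u(\cdot,-H)$ lie in $L_\infty(D)\subset L_2(D)$ thanks to the $C^2$-regularity of $h,h_b$ in~\eqref{bobbybrown} and the continuity of $u$.

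For coercivity, one simply observes that $a(\vartheta,\vartheta)\ge\|\nabla\vartheta\|_{L_2(\Omega(u))}^2$, and the Poincar\'e inequality~\eqref{t3P} then yields $\|\vartheta\|_{H^1(\Omega(u))}^2\le (1+4\|H+u\|_{L_\infty(D)}^2)\|\nabla\vartheta\|_{L_2(\Omega(u))}^2$ for $\vartheta\in H_B^1(\Omega(u))$. Since $G$ is therefore continuous, strictly convex, and coercive on the closed subspace $H_B^1(\Omega(u))$ of $H^1(\Omega(u))$, existence and uniqueness of the minimizer $\chi_u$ follow by standard arguments.

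For the a priori bound I would avoid the Euler--Lagrange route and instead exploit the minimization inequality $G[\chi_u]\le G[0]$. Expanding both sides and cancelling the terms $\tfrac{1}{2}\|\nabla h_u\|_{L_2(\Omega(u))}^2$ and $\tfrac{1}{2}\|\sqrt{\sigma}(h_u-\mathfrak{h}_u)(\cdot,-H)\|_{L_2(D)}^2$ common to both sides leaves
\[
\tfrac{1}{2}\bigl(\|\nabla\chi_u\|_{L_2(\Omega(u))}^2+\|\sqrt{\sigma}\,\chi_u(\cdot,-H)\|_{L_2(D)}^2\bigr)\le -\int_{\Omega(u)}\nabla\chi_u\cdot\nabla h_u\,\rd(x,z)-\int_D \sigma(h_u-\mathfrak{h}_u)\chi_u\,\rd x.
\]
Bounding the right-hand side by Cauchy--Schwarz in $L_2$ and then by Cauchy--Schwarz in $\R^2$ for the pair of resulting products produces an inequality of the form $\tfrac{1}{2}X\le\sqrt{X}\sqrt{Y}$, where $X$ and $Y$ denote the sums of squared norms on the left-hand side of the claimed estimate and on its right-hand side (without the factor~$4$), respectively. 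This yields $\sqrt{X}\le 2\sqrt{Y}$ and hence the stated bound with constant~$4$.

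The main subtlety is conceptual rather than technical: without the trace and Poincar\'e inequalities of Lemmas~\ref{lemT1H} and~\ref{lemT2H}, which require delicate arguments because $\Omega(u)$ need not be Lipschitz when $\mathcal{C}(u)\ne\emptyset$, neither the boundary integral defining $G$ nor the very space $H_B^1(\Omega(u))$ on which it is minimized would be properly set up. Once those preliminary results are in hand, the minimization argument sketched above is entirely classical.
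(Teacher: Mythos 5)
Your proof is correct and follows essentially the same route as the paper: existence and uniqueness via Lax--Milgram together with the Poincar\'e inequality \eqref{t3P} for coercivity, and the a priori bound via the comparison $G[\chi_u]\le G[0]$ followed by Cauchy--Schwarz, which yields exactly the constant $4$. The paper states this much more tersely, but your expanded computation of the estimate is precisely what its final sentence leaves implicit.
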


\begin{proof}
It readily follows from  \eqref{sigmam}, the Poincar\'e inequality \eqref{t3P},  and the Lax-Milgram theorem that there is a unique minimizer $\chi_u\in H_B^1(\Omega(u))$ of $G$ on $H_B^1(\Omega(u))$. Since $\chi_u$ satisfies
	\begin{equation*}
	G[\chi_u] \le G[\vartheta]\,, \qquad \vartheta\in  H_B^1(\Omega(u))\,,
	\end{equation*}
we obtain the claimed estimate by taking $\vartheta\equiv 0$ in the previous inequality.
\end{proof}

As a consequence of Theorem~\ref{P3} and Propositions~\ref{P1} and~\ref{lemt0P}, we obtain the convergence of the minimizers of the functionals.

\begin{corollary}\label{C3}
Suppose \eqref{MothersFinest} and \eqref{bobbybrown}-\eqref{bobbybrown40}. Then
$$
\chi_{u,\delta} \longrightarrow \chi_u \quad\text{ in }\ L_2(\Omega(u)) \qquad \text{and}\qquad \chi_{u,\delta} \rightharpoonup \chi_u \quad\text{ in }\ H^1(\Omega(u))
$$
as $\delta\rightarrow 0$, and
$$
\lim_{\delta\rightarrow 0} \, G_\delta\left[\chi_{u,\delta} \right]=G[\chi_u]\,.
$$
\end{corollary}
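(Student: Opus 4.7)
The plan is to combine the Gamma convergence from Theorem~\ref{P3} with the uniform coercivity inherited from Proposition~\ref{P1} in the standard ``fundamental theorem of Gamma convergence'' style, and to extract the convergences stated in the corollary by a subsequence/uniqueness-of-limit argument.

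First, I would establish a uniform $H^1(\Omega(u))$-bound on the minimizers $\chi_{u,\delta}$. The estimate \eqref{est} gives
\begin{equation*}
\int_{\Omega(u)}|\nabla\chi_{u,\delta}|^2\,\rd(x,z) + \delta\int_{\mathcal{R}_\delta}\sigma|\nabla\chi_{u,\delta}|^2\,\rd(x,z) \le 4\int_{\Omega_\delta(u)}\sigma_\delta|\nabla h_{u,\delta}|^2\,\rd(x,z),
\end{equation*}
whose right-hand side is bounded uniformly in $\delta\in(0,1)$: in $\Omega(u)$ this is immediate from $\sigma_\delta=1$ and $h_{u,\delta}=h_u\in C^2$, while in $\mathcal{R}_\delta$ the product $\sigma_\delta|\partial_z h_{u,\delta}|^2 = \delta^{-1}\sigma|\partial_z h_b|^2$ (see \eqref{pz}) integrated over a strip of height $\delta$ is of order one, and the $x$-derivative contribution from \eqref{haha} is of order $\delta$ since $\sigma_\delta=\delta\sigma$ and $|\nabla_x h_{u,\delta}|$ is bounded. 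Combined with the Poincar\'e inequality \eqref{t3P} this yields $\|\chi_{u,\delta}\|_{H^1(\Omega(u))}\le C$ uniformly in $\delta$; applying Cauchy--Schwarz in the $z$-variable to $\chi_{u,\delta}$ using $\chi_{u,\delta}(\cdot,-H-\delta)=0$ further gives $\|\chi_{u,\delta}\|_{L_2(\mathcal{R}_\delta)}^2 = O(\delta)$.

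Next, I would extract a subsequence $\chi_{u,\delta_k}\rightharpoonup \chi_0$ weakly in $H^1(\Omega(u))$ for some $\chi_0\in H_B^1(\Omega(u))$. Under the segment property \eqref{MothersFinest1} the embedding $H^1(\Omega(u))\hookrightarrow L_2(\Omega(u))$ is compact, so the convergence upgrades to strong in $L_2(\Omega(u))$; together with the $O(\delta_k)$-bound on $\mathcal{R}_{\delta_k}$, extending by zero gives strong $L_2(\Omega_M)$-convergence, which is the topology of Theorem~\ref{P3}. For any $\vartheta\in H_B^1(\Omega(u))$, let $(\vartheta_\delta)$ be a recovery sequence from part~(ii) of the proof of Theorem~\ref{P3}; the minimality of $\chi_{u,\delta}$ together with the Gamma-liminf inequality yields
\begin{equation*}
G[\chi_0] \le \liminf_{\delta\to 0}G_\delta[\chi_{u,\delta}] \le \limsup_{\delta\to 0}G_\delta[\chi_{u,\delta}] \le \lim_{\delta\to 0} G_\delta[\vartheta_\delta] = G[\vartheta].
\end{equation*}
Thus $\chi_0$ minimizes $G$ on $H_B^1(\Omega(u))$, and the uniqueness assertion of Proposition~\ref{lemt0P} forces $\chi_0=\chi_u$. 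Since every subsequence has a further sub-subsequence converging to the same limit $\chi_u$, the whole family converges in the senses claimed. Choosing $\vartheta=\chi_u$ in the above chain collapses it to equalities, giving the energy convergence $\lim_{\delta\to 0}G_\delta[\chi_{u,\delta}] = G[\chi_u]$.

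The main obstacle I anticipate is the compactness step, since $\Omega(u)$ may fail to be Lipschitz when $\mathcal{C}(u)\ne \emptyset$ and the Rellich--Kondrachov theorem is not available in its most standard form. However, the segment property \eqref{MothersFinest1}, which the paper already invokes for the density of $C^1(\overline{\Omega(u)})$ in $H^1(\Omega(u))$ and the construction of traces in Lemma~\ref{lemT1H}, is precisely what is needed for a compact embedding $H^1(\Omega(u))\hookrightarrow L_2(\Omega(u))$ on the bounded set $\Omega(u)$ via a standard extension argument, so the plan indeed goes through.
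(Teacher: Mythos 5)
Your argument is correct and follows essentially the same route as the paper: a uniform bound on $\int_{\Omega_\delta(u)}\sigma_\delta|\nabla h_{u,\delta}|^2$ combined with \eqref{est} and the Poincar\'e inequality \eqref{t3P}, compactness of $H^1(\Omega(u))\hookrightarrow L_2(\Omega(u))$, and then the fundamental theorem of $\Gamma$-convergence (which you unwind explicitly via the liminf/recovery-sequence chain, whereas the paper simply cites \cite[Corollary~7.20]{DaM93}) together with uniqueness of the minimizer from Proposition~\ref{lemt0P}. The only caveat is your final aside: the compact embedding for a bounded domain with the segment property is not obtained by a ``standard extension argument'' (no bounded extension operator is guaranteed for such domains); the paper instead invokes it directly from \cite[I.Theorem~1.4]{Necas67} or \cite[Theorem~11.21]{Leoni17}, which is the correct justification.
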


\begin{proof}
Let $\delta\in (0,1)$. We  use \eqref{sigmad}, \eqref{pz}, and \eqref{haha}  to obtain that
$$
\int_{\Omega_\delta(u)}\sigma_\delta\vert \nabla h_{u,\delta}\vert^2\,\rd (x,z)\le c_1
$$
for a constant $c_1>0$  independent of $\delta$. This estimate, along with \eqref{sigmam} and \eqref{est}, implies that
\begin{equation}\label{t3}
\|\nabla\chi_{u,\delta}\|_{L_2(\Omega(u))}^2+\delta \|\nabla\chi_{u,\delta}\|_{L_2(\mathcal{R}_\delta )}^2\le c_1\,.
\end{equation}
On the one hand, we infer from the Poincar\'e inequality \eqref{t3P} and  \eqref{t3} that
\begin{equation}\label{t4a}
\|\chi_{u,\delta}\|_{H^1(\Omega(u))}\le c_2\,.
\end{equation}
On the other hand, since $\chi_{u,\delta}(\cdot,-H-\delta)\equiv 0$ we can use the same argument as for the derivation of \eqref{t3P} to show that
\begin{equation*}
\|\chi_{u,\delta}\|_{L_2(\mathcal{R}_\delta )}\le 2\delta \|\partial_z \chi_{u,\delta}\|_{L_2(\mathcal{R}_\delta )}\,.
\end{equation*}
Therefore, using \eqref{t3} and the trivial extension of $\chi_{u,\delta}$,
\begin{equation}\label{t4}
\|\chi_{u,\delta}\|_{L_2(D\times (-H-1,-H))}\le c_3\sqrt{\delta}\,.
\end{equation}
Now, despite of the possible non-Lipschitz character of $\Omega(u)$, the
 embedding of $H^1(\Omega(u))$ in $L_2(\Omega(u))$ is compact, see \cite[I.Theorem~1.4]{Necas67} or \cite[Theorem~11.21]{Leoni17},  and we infer from \eqref{t4a} and \eqref{t4} that there are a function $\zeta\in L_2(\Omega_M)\cap H^1(\Omega(u))$  vanishing in $\Omega_M\setminus\Omega(u)$ and a sequence $\delta_n\rightarrow 0$  such that $\chi_{u,\delta_n}\rightarrow \zeta $ in $L_2(\Omega_M)$ and $\chi_{u,\delta_n}\rightharpoonup \zeta$ in $H^1(\Omega(u))$. Thus, Theorem~\ref{P3} and the fundamental theorem of $\Gamma$-convergence \cite[Corollary~7.20]{DaM93}  imply that $\zeta$ is a minimizer of $G$ on $L_2(\Omega_M)$ and that 
$$
\lim_{{n}\rightarrow \infty} G_{\delta_n}\left[ \chi_{u,\delta_n}\right]=G[\zeta]\,.
$$
Obviously, this implies that $\zeta\vert_{\Omega(u)}\in H_B^1(\Omega(u))$ is a minimizer of $G$ on $H_B^1(\Omega(u))$, hence $\zeta\vert_{\Omega(u)}=\chi_u$ by Proposition~\ref{lemt0P} and it is  independent of the sequence $(\delta_n)_{n\in\N}$.  Clearly, $G[\zeta]$ only depends on $\zeta\vert_{\Omega(u)}=\chi_u$ and is independent of the sequence $(\delta_n)_{n\in\N}$.
 This proves the claim.
\end{proof}

\subsection{The Limiting Model}

Finally, we shall derive the analogue to equations~\eqref{TMP} satisfied by $\psi_u:=\chi_u+h_u$ for which we {\it suppose} that $\chi_u\in H_B^1(\Omega(u))\cap H^2(\Omega(u))$ and that Gau\ss ' theorem applies for $\Omega(u)$ and $\nabla\chi_u$ (which requires some geometric condition on the boundary of $\Omega(u)$ and the existence of boundary traces for $\nabla\chi_u$, see \cite{Koenig}).

Let $u\in H_0^1(D)\cap C(\bar{D})$ satisfy \eqref{ufix}. Since $\chi_u\in H_B^1(\Omega(u))$ is the minimizer of $G$ on $H_B^1(\Omega(u))$ by Corollary~\ref{C3}, it satisfies the variational equality
\begin{equation*}
\begin{split}
0= &\int_{\Omega(u)}   \nabla (\chi_u+h_u)\cdot \nabla \phi\,\rd (x,z)\\
& +\int_{D} \sigma(x,-H) \big(\chi_u(x,-H)+h_u(x,-H)-\mathfrak{h}_u(x,-H)\big) \phi(x,-H)\,\rd x
\end{split}
\end{equation*}
for any $\phi\in H_B^1(\Omega(u))$. Then, by standard computations,  
\begin{equation*}
\begin{split}
0= &\int_{\Omega(u)}   \nabla (\chi_u+h_u)\cdot \nabla \phi\,\rd (x,z)\\
& +\int_{D} \sigma(x,-H) \big(\chi_u(x,-H)+h_u(x,-H)-\mathfrak{h}_{u}(x,-H)\big) \phi(x,-H)\,\rd x\\
= &-\int_{\Omega(u)}   \Delta (\chi_u+h_u)  \phi\,\rd (x,z)+\int_{\partial\Omega(u)}   \nabla (\chi_u+h_u) \cdot {\bf n}_{\partial\Omega(u)} \phi\,\rd S\\
& +\int_{D} \sigma(x,-H) \big(\chi_u(x,-H)+h_u(x,-H)-\mathfrak{h}_{u}(x,-H)\big) \phi(x,-H)\,\rd x\,.
\end{split}
\end{equation*}
Therefore, since $\phi$ vanishes on $\partial\Omega(u)\setminus \Sigma$ ,
\begin{equation}
\begin{split}
0 = &-\int_{\Omega(u)}   \Delta (\chi_u+h_u)  \phi\,\rd (x,z)-\int_{D}   \partial_z (\chi_u+h_u)(\cdot,-H) \phi(\cdot,-H)\,\rd x\\
& +\int_{D} \sigma(\cdot,-H) \big(\chi_u(\cdot,-H)+h_u(\cdot,-H)-\mathfrak{h}_{u}(\cdot,-H)\big) \phi(\cdot,-H)\,\rd x\,.
\end{split} \label{Metallica}
\end{equation}
Consequently, in this case $\psi_u=\chi_u+h_u\in H^2(\Omega(u))$ solves Laplace's equation with mixed boundary conditions of Dirichlet and Robin type as announced in \eqref{MBP0}. The corresponding electrostatic energy is
$$
E_e(u):=-G[\psi_u-h_u]\,;
$$
that is,
\begin{align*}
E_e(u)= &-\dfrac{1}{2}\displaystyle\int_{\Omega(u)}   \big\vert\nabla \psi_u\big\vert^2\,\rd (x,z)\\
& -\dfrac{1}{2}\displaystyle\int_{ D} \sigma(x,-H) \big\vert \psi_u(x,-H)-\mathfrak{h}_u(x,-H)\big\vert^2\,\rd x \,,
\end{align*}
 where $\mathfrak{h}_u$ is defined in \eqref{h0}. By Corollary~\ref{C3} we have
$E_{e,\delta} (u) \rightarrow E_e(u)$ as~$\delta\rightarrow 0$.\\

For the special case that $D$ is an interval in $\R$ and $h_b$ is of the form
\begin{equation}\label{MF}
h_b(x,z,w)=h(x,-H,w)+(z+H)\big(h(x,-H,w)-\mathfrak{h}(x,w)\big)
\end{equation}
for $(x,z,w)\in \bar D\times [-H-1,-H]\times [-H,\infty)$ with $\mathfrak{h}\in C^2(\bar D\times [-H,\infty))$, the previous computation can be rigorously justified.

\begin{theorem}\label{R1}
If $D=(a,b)\subset\R$, $u\in H^2(D)\cap H_0^1(D)$ with $u\ge -H$ in $D$, and $h$ and $h_b$ satisfy \eqref{MF}, then \eqref{MBP0} admits a unique solution $\psi_u \in H^2(\Omega(u))$ with $\partial_z \psi_u(\cdot,-H)\in L_2(D\setminus \mathcal{C}(u))$. It is given as $\psi_u=\chi_u+h_u$ with $\chi_u\in H_B^1(\Omega(u))$ being the unique minimizer of $G$ on $H_B^1(\Omega(u))$. 
\end{theorem}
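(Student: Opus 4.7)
I would identify the solution as $\psi_u := \chi_u + h_u$, where $\chi_u \in H_B^1(\Omega(u))$ is the unique minimizer of $G$ provided by Proposition~\ref{lemt0P}. The proof then splits into three tasks: (a)~reformulate the Euler--Lagrange system for $\chi_u$ using~\eqref{MF}; (b)~promote $\chi_u$ from $H_B^1(\Omega(u))$ to $H^2(\Omega(u))$ while capturing the trace of $\partial_z \chi_u$ on $D\setminus\mathcal{C}(u)$; (c)~derive the pointwise system~\eqref{MBP0} and prove uniqueness.

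For (a), the explicit form~\eqref{MF} gives $\mathfrak{h}_u(x,-H) = \mathfrak{h}(x,u(x))$ and $\partial_z h_b(x,-H,w) = h(x,-H,w) - \mathfrak{h}(x,w)$; combining the second relation with the compatibility~\eqref{bobbybrown3} yields $\partial_z h_u(x,-H) = \sigma(x,-H)(h_u - \mathfrak{h}_u)(x,-H)$. Hence $h_u$ itself satisfies the Robin condition on $\Sigma$, and consequently $\chi_u = \psi_u - h_u$ solves a problem with \emph{homogeneous} Robin data: $-\Delta \chi_u = f$ in $\Omega(u)$ with $f := \Delta h_u \in L_2(\Omega(u))$ (since $h \in C^2$ and $u \in H^2(D)$), zero Dirichlet on $\partial\Omega(u)\setminus\Sigma$, and $-\partial_z \chi_u + \sigma \chi_u = 0$ on $\Sigma$.

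For (b), I would exploit the one-dimensionality of $D = (a,b)$ to decompose $\Omega(u) = \bigsqcup_j \Omega_j$ into components over the maximal intervals $I_j \subset D$ on which $u > -H$, and on each $\Omega_j$ flatten the top boundary via the $C^1$-diffeomorphism $T_j(x,y) := (x, -H + y(u(x)+H))$ from $I_j \times (0,1)$ onto $\Omega_j$. The pulled-back equation for $\tilde\chi_j := \chi_u \circ T_j$ is a divergence-form elliptic equation on a rectangle whose coefficients contain $(u+H)^{\pm 1}$ and therefore degenerate at endpoints of $I_j$ lying in $\mathcal{C}(u)$; since $u \in H^2(D) \hookrightarrow C^{1,1/2}(\bar D)$ vanishes together with its derivative there, the corresponding cusps of $\partial\Omega(u)$ are of $3/2$-power type. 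To circumvent the degeneracy I would approximate $u$ by $u_\varepsilon \in C^2(\bar D) \cap H_0^1(D)$ with $u_\varepsilon \ge u + \varepsilon \phi$ for a fixed $\phi \in H_0^1(D)$ positive on $D$, and $u_\varepsilon \to u$ in $H^2(D)$. On the $C^2$-domain $\Omega(u_\varepsilon)$, classical elliptic regularity gives an $H^2$-solution $\chi_\varepsilon$ of the analogous homogeneous-Robin problem; after pulling back via $T_\varepsilon$ to the fixed rectangle $D \times (0,1)$ I would derive $\varepsilon$-uniform $H^2$-bounds by energy estimates adapted to the explicit algebra of the transformed operator, then pass to the limit $\varepsilon \to 0$ to obtain $\chi_u \in H^2(\Omega(u))$ together with $\partial_z \chi_u(\cdot,-H) \in L_2(D \setminus \mathcal{C}(u))$.

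With (b) in hand, identity~\eqref{Metallica}, tested against functions in $H_B^1(\Omega(u))$ with support first in the bulk and then with non-trivial trace on $\Sigma \setminus (\mathcal{C}(u) \times \{-H\})$, recovers \eqref{MBP1} and \eqref{MBP3} pointwise, while \eqref{MBP2} is built into the definition of $H_B^1(\Omega(u))$. Uniqueness follows because any $\tilde\psi \in H^2(\Omega(u))$ with $\partial_z \tilde\psi(\cdot,-H) \in L_2(D\setminus\mathcal{C}(u))$ solving \eqref{MBP0} produces $\tilde\chi := \tilde\psi - h_u \in H_B^1(\Omega(u))$ which, by integration by parts over Lipschitz truncations of the $\Omega_j$ away from the cusps, satisfies the variational equation characterising the unique minimizer $\chi_u$ of $G$. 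The main obstacle is Stage~(b): the flattened elliptic operator loses uniform ellipticity at the $3/2$-cusps, so the required $H^2$-estimate cannot be quoted from standard references and must be established by hand, exploiting both the low dimension and the homogeneous structure obtained in Stage~(a).
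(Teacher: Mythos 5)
Your overall architecture (take $\psi_u=\chi_u+h_u$ with $\chi_u$ the minimizer from Proposition~\ref{lemt0P}, upgrade its regularity, integrate by parts to recover \eqref{MBP0}, and prove uniqueness by running the argument backwards) coincides with the paper's, and your stage~(a) is a correct and genuinely clarifying observation: under \eqref{MF} one has $\mathfrak{h}_u(x,-H)=\mathfrak{h}(x,u(x))$ and, via \eqref{bobbybrown3}, $\partial_z h_u(\cdot,-H)=\sigma(\cdot,-H)(h_u-\mathfrak{h}_u)(\cdot,-H)$, so $\chi_u$ indeed solves a homogeneous Robin problem with $L_2$ right-hand side (using $H^2(D)\hookrightarrow C^{1,1/2}(\bar D)$ in one dimension). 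This explains why the hypothesis \eqref{MF} is imposed.

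The genuine gap is your stage~(b). The entire analytic weight of the theorem sits in the assertion that the minimizer $\chi_u$ belongs to $H^2(\Omega(u))$ up to the cuspidal points of $\partial\Omega(u)$ created by $\mathcal{C}(u)$, and your proposal does not actually establish it: the sentence ``derive $\varepsilon$-uniform $H^2$-bounds by energy estimates adapted to the explicit algebra of the transformed operator'' names the estimate that is needed rather than proving it. After flattening, the coefficients involve $(u+H)^{-1}$ and blow up at the cusps, so uniform ellipticity is lost exactly where the estimate must be uniform in $\varepsilon$; moreover, even on the approximating domains $\Omega(u_\varepsilon)$ the problem is a \emph{mixed} Dirichlet--Robin problem on a domain with corners at $\partial D\times\{-H\}$ and $\partial D\times\{0\}$, so ``classical elliptic regularity gives an $H^2$-solution'' already requires a corner analysis you do not supply. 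The paper does not prove this regularity either --- it quotes it from \cite{BR91,LNW} --- but a self-contained proof must replace that citation with actual estimates, and your sketch does not. Two smaller points: (i) once \eqref{U2} is available, the unweighted trace $\partial_z\psi_u(\cdot,-H)\in L_2(D\setminus\mathcal{C}(u))$ should be read off \emph{a posteriori} from the Robin identity together with Lemma~\ref{lemT2H}, not extracted from the $H^2$-bound (Lemma~\ref{lemT1H} only yields the weighted space $L_2(D\setminus\mathcal{C}(u),(H+u)\,\rd x)$); (ii) your integration by parts ``over Lipschitz truncations away from the cusps'' needs a justification for why the boundary terms near the cusps vanish in the limit --- the paper instead approximates $\chi_u$ in $H^2(\Omega(u))$ by functions in $C^\infty(\overline{\Omega(u)})$ (using the segment property) and applies a Gau\ss{} theorem valid for boundaries with finitely many singular points, controlling the limit of the interface term with the weighted trace of Lemma~\ref{lemT1H} and test functions $\phi$ with $\phi(\cdot,-H)$ compactly supported in $D\setminus\mathcal{C}(u)$. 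Either route can be made to work, but the step must be argued, not asserted.
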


\begin{remark}
We point out once more that, since $\Omega(u)$ need not be Lipschitz, the stated $L_2$-regularity of $\partial_z \psi_u(\cdot,-H)$ does not follow from the $H^2$-regularity of $\psi_u$ by a standard trace theorem. In fact, Lemma~\ref{lemT1H} only ensures that $\partial_z \psi_u(\cdot,-H)$ belongs to the weighted space $L_2(D\setminus \mathcal{C}(u),(H+u)\rd x)$. That $\partial_z \psi_u(\cdot,-H)$ additionally belongs to  $L_2(D\setminus \mathcal{C}(u))$ follows \textit{a posteriori} from \eqref{MBP3}, as shown in the proof below.
\end{remark}

\begin{proof}[Proof of Theorem~\ref{R1}]
Since $D$ is a one-dimensional interval, $H^2(D)$ is embedded in $C(\bar D)$, which implies \eqref{MothersFinest0} as well as \eqref{MothersFinest1} by \cite[Exercise~10.26]{Leoni17}. It follows from \cite{BR91, LNW} that the unique minimizer $\chi_u\in H_B^1(\Omega(u))$ of $G$ on $H_B^1(\Omega(u))$ belongs to $H^2(\Omega(u))$. Thanks to \eqref{MothersFinest1} and \cite[Theorem~10.29]{Leoni17}, see also \cite[II.Theorem~3.1]{Necas67}, there is a sequence $(\chi_{u,j})_{j\ge 1}$ in $C^\infty\big(\overline{\Omega(u)}\big)$ such that
\begin{equation}
\lim_{j\to\infty} \|\chi_{u,j} -\chi_u\|_{H^2(\Omega(u))} = 0 \,. \label{MotleyCrue} 
\end{equation}

Now, for $j\ge 1$ and $\phi\in C^1_B\big(\overline{\Omega(u)}\big)$, we infer from \cite[Folgerung~7.5]{Koenig} and the regularity of $\chi_{u,j}$, $\phi$, and $h$ that Gau\ss' theorem can be applied in each connected component of $\Omega(u)$, as there are at most two singular points. Therefore, we obtain
\begin{equation*}
\begin{split}
& \int_{\Omega(u)}   \nabla (\chi_{u,j}+h_u)\cdot \nabla \phi\,\rd (x,z)\\
& \qquad = -\int_{\Omega(u)}   \Delta (\chi_{u,j}+h_u)  \phi\,\rd (x,z)+\int_{\partial\Omega(u)}   \nabla (\chi_{u,j}+h_u) \cdot {\bf n}_{\partial\Omega(u)} \phi\,\rd S\\
& \qquad = -\int_{\Omega(u)} \Delta (\chi_{u,j}+h_u)  \phi\,\rd (x,z) - \int_{D}  \partial_z (\chi_{u,j}+h_u)(\cdot,-H) \phi(\cdot,-H)\,\rd x\,,
\end{split}
\end{equation*}
since $\phi$ vanishes on $\partial\Omega(u)\setminus \Sigma(u)$. Now, thanks to \eqref{MotleyCrue}, it is straightforward to pass to the limit $j\to\infty$ in the two integrals over $\Omega(u)$. Moreover, by Lemma~\ref{lemT1H} and \eqref{MotleyCrue}, 
\begin{equation*}
\lim_{j\to \infty} \int_{D\setminus\mathcal{C}(u)} \left| \partial_z \chi_{u,j}(\cdot,-H) - \partial_z \chi_u(\cdot,-H) \right|^2 (H+u)\,\rd x = 0.
\end{equation*}
Consequently, if $\phi(\cdot,-H)$ is compactly supported in $D\setminus \mathcal{C}(u)$, then
\begin{equation*}
\lim_{j\to\infty} \int_{D}  \partial_z (\chi_{u,j}+h_u)(\cdot,-H) \phi(\cdot,-H)\,\rd x = \int_D \partial_z (\chi_u+h_u)(\cdot,-H) \phi(\cdot,-H)\,\rd x\,.
\end{equation*}
Thanks to the above analysis, the identity \eqref{Metallica} holds true for any test function $\phi\in C^1_B\big(\overline{\Omega(u)}\big)$ such that $\phi(\cdot,-H)$ is compactly supported in $D\setminus \mathcal{C}(u)$. We then deduce from \eqref{Metallica} that $\psi_u=\chi_u+h_u$ satisfies \eqref{MBP1} and \eqref{MBP2} in $L_2(\Omega(u))$ and $L_2(\partial\Omega(u)\setminus \Sigma(u))$, respectively, while 
\begin{equation}
\partial_z\psi_u(\cdot,-H) = \big( \sigma (\chi_u + h_u - \mathfrak{h}_u) \big)(\cdot,-H) \ \text{ a.e. in }\ D\setminus\mathcal{C}(u)\,. \label{U2}
\end{equation}
Since $\chi_u\in H_B^1(\Omega(u))$, the right-hand side of \eqref{U2} belongs to $L_2(D\setminus\mathcal{C}(u))$ by Lemma~\ref{lemT2H} and the regularity of $h_u$ and $\mathfrak{h}_u$, so that $\partial_z\psi_u(\cdot,-H)$ also belongs to that space.
\end{proof}

The analysis of the complete MEMS model coupling \eqref{MBP0} to an equation for $u$ is performed in a forthcoming research~\cite{LNW} for $n=1$.

\bibliographystyle{siam}
\bibliography{BGM}

\end{document}